\newtheorem{theorem}{Theorem}[section]
\newtheorem{proposition}[theorem]{Proposition}
\newtheorem{lemma}[theorem]{Lemma}
\newtheorem{corollary}[theorem]{Corollary}
\theoremstyle{definition}
\theoremstyle{remark}
\newtheorem{remark}[theorem]{Remark}
\newcommand{\Hom}{\ensuremath{\mathrm{Hom}}}
\newcommand{\Z}{\ensuremath{\mathbb{Z}}}
\newcommand{\Q}{\ensuremath{\mathbb{Q}}}
\renewcommand{\L}{\ensuremath{\mathcal{L}}}
\DeclareMathOperator{\rk}{rank}
\DeclareMathOperator{\dummygg}{\mathfrak{g}}
\renewcommand{\gg}{\dummygg}
\DeclareMathOperator{\hh}{\mathfrak{h}}
\DeclareMathOperator{\TT}{\mathfrak{t}}
\DeclareMathOperator{\s}{\mathfrak{s}}
\newcommand{\lra}{\longrightarrow}
\newcommand{\dr}[3]{\ensuremath{#1\stackrel{#2}
{\longrightarrow}#3}}
\newcommand{\ddr}[5]{\ensuremath{#1\stackrel{#2}
{\longrightarrow}#3\stackrel{#4}{\longrightarrow}#5}}
\begin{document}
\title{The integral homology ring of the based loop space on some generalised symmetric spaces}
\author{Jelena Grbi\'{c} and Svjetlana Terzi\'c}
\address{School of Mathematics, University of Manchester,
         Manchester M13 9PL, United Kingdom}
\address{Faculty of Science, University of Montenegro,D\v zord\v za Va\v singtona bb,
         81000 Podgorica, Montenegro}
\email{Jelena.Grbic@manchester.ac.uk}
\email{sterzic@ac.me}
\subjclass[2000]{Primary 57T20, 55P620; Secondary 55P35, 57T35.}
\date{}
\keywords{Pontrjagin homology ring, generalised symmetric spaces, Sullivan minimal
model}

\begin{abstract}
In this paper we calculate the integral Pontrjagin homology ring of the based loop space on some generalised symmetric spaces with a toral stationary subgroup. In the Appendix we show that the method can be applied to other type generalised symmetric spaces as well.

\end{abstract}

\maketitle


\section{Introduction}
A pointed topological space $X$ with multiplication $\mu\colon X\times X\lra X$ is called an $H$-space. The multiplication induces a ring structure in homology $H_*(X)$. A based loop space $\Omega X$ is an $H$-space as one of the possible multiplications is given by loop concatenation. The ring structure in $H_*(\Omega X)$ induced by loop multiplication is called the Pontrjagin homology ring.  In this paper we compute the integral Pontrjagin homology ring of the based loop space on some generalised symmetric spaces $G/S$, where $G$ is a simple compact Lie group $G$ having the torus $S$ as a stationary subgroup. More precisely, we consider generalised symmetric spaces which have zero Euler characteristic, that is, for which the torus $S$  is not maximal in $G$.  Thus this paper can be considered as a natural sequel to  paper~\cite{GT}, where the authors computed the integral Pontrjagin homology of the based loop space of complete flag manifolds of compact simple Lie groups.

Generalised symmetric spaces $G/H$ are defined by the condition that their stationary subgroup $H$ is the fixed point subgroup of a finite order automorphism of the group $G$. These spaces consequently admit finite order symmetries making them to have rich geometry and thus attract constant attention of many geometry focused research starting with~\cite{WG1},\cite{WG2} until recent once, see for example~\cite{AA1},\cite{B},\cite{KT}. Generalised symmetric spaces play important role in the theory of homogeneous spaces and, thus, in many areas of mathematics and  physics such as representation theory, combinatorics, string topology. In particular, complex homogeneous spaces $G/H$, which are important examples in complex cobordisms and theory of characteristic classes, were characterised by Passiencier~\cite{P} as the homogeneous spaces of positive Euler characteristic whose isotropy subgroup $H$ is a fixed point subgroup for some odd order finite subgroup of the inner automorphisms of the group $G$ and thus are complex generalised symmetric spaces.

The methods used in this paper are analogous to that of~\cite{GT}. By~\cite{KT} and~\cite{TFP} all generalised symmetric spaces are Cartan pair homogeneous spaces and, thus, formal in the sense of Sullivan (see Subsections~\ref{Coh},~\ref{MinMod}). Therefore, starting from their rational cohomology algebras  and using the method of rational homotopy theory together with the Milnor and Moore theorem which expresses the rational Pontrjagin homology algebra in terms of the universal enveloping algebra of the rational homotopy Lie algebra, we compute the rational homology algebras for the based loop space on generalised symmetric spaces $G/S$.  We prove that  the integral loop homology groups of these spaces have no torsion which together with the rational computations enables us to determine their integral Pontrjagin ring structure as well.  In particular, we calculate the integral Pontrjagin homology ring of the based loop spaces on the following generalised symmetric spaces: $SU(2n+1)/T^n$,  $SU(2n)/T^n$,  $SO(2n+2)/T^n$,  $SO(8)/T^2$, and $E_6/T^4$.

To illustrate that our approach is more general, in the Appendix we include the analogous calculations for the generalised symmetric space $U(n)/(T^k\times U(n-k))$ which obviously is not obtained by taking a simple compact Lie group and quotienting out a toral subgroup.

\section{Torsion in loop space homology}
\begin{proposition}\label{simplyconnected}
Let $G$ be a compact connected Lie group and $H$ its closed connected subgroup such that $G/H$ is simply connected. Then $H_{*}(\Omega (G/H);\Z )$ is torsion free if and only if $H_{*}(\Omega (G/(T^k\times H));\Z )$ is torsion free, whenever $T^k\times H$ is a subgroup of $G$, where $k\leq \rk G-\rk H$.
\end{proposition}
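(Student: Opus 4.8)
The plan is to prove the proposition by induction on $k$, reducing everything to the case of a single circle. First I would record that every intermediate quotient is simply connected: for each $j\le k$ the subtorus $T^j\subset T^k\times H$ meets $H$ trivially, so it acts freely on the right of $G/H$ and produces a principal bundle $T^j\to G/H\to G/(T^j\times H)$. The homotopy exact sequence, together with $\pi_1(G/H)=0$ and $\pi_0(T^j)=0$, then forces $\pi_1(G/(T^j\times H))=0$. Writing $T^k\times H=S^1\times H'$ with $H'=T^{k-1}\times H$, the inductive hypothesis relates the torsion of $H_*(\Omega(G/H))$ and $H_*(\Omega(G/H'))$, so it suffices to treat the passage from $H'$ to $S^1\times H'$, i.e.\ the case $k=1$. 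Thus I reduce to showing: for a closed connected $H\le G$ with $G/H$ simply connected and $S^1\times H\le G$, the group $H_*(\Omega(G/H);\Z)$ is torsion free if and only if $H_*(\Omega(G/(S^1\times H));\Z)$ is.

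For $k=1$ the key object is the principal circle bundle $S^1\to G/H\to G/(S^1\times H)$ with classifying map $c\colon G/(S^1\times H)\to BS^1$, which realises $G/H$ as the homotopy fibre of $c$. Looping the fibration $G/H\to G/(S^1\times H)\xrightarrow{c}BS^1$ yields a fibration
\begin{equation*}
\Omega(G/H)\lra \Omega(G/(S^1\times H))\xrightarrow{\ \Omega c\ }\Omega BS^1\simeq S^1 ,
\end{equation*}
whose fibre is $\Omega(G/H)$ and whose total space is the connected loop space $\Omega(G/(S^1\times H))$. The crucial point, which I expect to be the main obstacle, is that the local coefficient system is simple, i.e.\ the monodromy acts trivially on $H_*(\Omega(G/H))$. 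To see this I would use that $\Omega c$ is a loop map, so for $\alpha$ in the fibre (where $\Omega c(\alpha)$ is the basepoint of $S^1$) left loop-multiplication by $\alpha$ preserves fibres; hence the fibre $\Omega(G/H)$ acts fibrewise and the monodromy around the generator of $\pi_1(S^1)$ is homotopic to left translation $L_a$ by a single element $a\in\Omega(G/H)$. Since $\Omega(G/H)$ is a path-connected $H$-space, any path from the unit to $a$ gives a homotopy $L_a\simeq \mathrm{id}$, so the monodromy is trivial on homology.

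With trivial monodromy the Wang exact sequence of the fibration degenerates into short exact sequences
\begin{equation*}
0\lra H_n(\Omega(G/H);\Z)\lra H_n(\Omega(G/(S^1\times H));\Z)\lra H_{n-1}(\Omega(G/H);\Z)\lra 0
\end{equation*}
for every $n$. From these I conclude in both directions. If $H_*(\Omega(G/H);\Z)$ is torsion free, then, being finitely generated in each degree (as $G/H$ is a finite simply connected complex), it is free abelian; the quotient term is then free, the sequence splits, and $H_*(\Omega(G/(S^1\times H));\Z)$ is free, hence torsion free. Conversely, if the middle term is torsion free, then $H_n(\Omega(G/H);\Z)$, being a subgroup of a torsion free group, is torsion free. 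Feeding this $k=1$ equivalence back into the induction completes the proof, so the entire argument rests on the simplicity of the coefficient system established via the $H$-space structure.
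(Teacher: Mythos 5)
Your proof is correct, but it takes a genuinely different route from the paper's. The paper argues in one stroke: extending the fibration $T^k\to G/H\to G/(T^k\times H)$ to the Puppe sequence $\Omega(G/H)\lra\Omega(G/(T^k\times H))\lra T^k\stackrel{*}{\lra}G/H$, it asserts that the connecting map $T^k\to G/H$ is nullhomotopic and concludes the homotopy decomposition $\Omega(G/(T^k\times H))\simeq T^k\times\Omega(G/H)$ (as spaces, not $H$-spaces), from which the torsion statement is immediate by the K\"unneth formula. You instead induct down to $k=1$, loop the classifying map $G/(S^1\times H')\to BS^1$ to get a fibration $\Omega(G/H')\to\Omega(G/(S^1\times H'))\to S^1$, kill the monodromy, and read off both directions from the resulting Wang short exact sequences. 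Your monodromy step is sound and can be made strict with Moore loops (with ordinary concatenation the unit and the fibre-preservation of translations hold only up to homotopy): lifting the generating loop of the base to a path in the total space starting at the unit, the translation homotopy $h_t(x)=\tilde g(t)\cdot x$ covers it, so the monodromy is $L_a$ with $a=\tilde g(1)$ in the fibre, and path-connectivity of $\Omega(G/H')$ (here is where $\pi_1(G/H')=0$ enters, which you correctly verified for all intermediate quotients) deforms $L_a$ to the identity. What each approach buys: the paper's decomposition is shorter and yields the full homology as a product at once (feeding its Section 4), but it rests on the unproved nullhomotopy of $T^k\to G/H$, whose standard justification (lift generators of $\pi_1(T^k)$ to $\pi_2(G/(T^k\times H))$ using $\pi_1(G/H)=0$ and multiply in the loop space) is exactly the $H$-space mechanism you make explicit; your version supplies that missing argument, and in fact secretly recovers the paper's splitting for $k=1$, since a fibration over $S^1$ with monodromy homotopic to the identity is fibre-homotopy trivial. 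One simplification: in the forward direction you need neither finite generation nor the splitting of the short exact sequences, because an extension of a torsion-free abelian group by a torsion-free abelian group is automatically torsion-free.
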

\begin{proof}
There is a fibration sequence
\[
\Omega (G/H)\lra \Omega (G/(T^k\times H))\lra \dr{T^k}{*}{G/H}\lra G/(T^k\times H)
\]
which implies the homotopy decomposition
\[
\Omega (G/(T^k\times H))\simeq T^k\times \Omega( G/H)
\]
and the proof of the proposition. It is important to remark that although the spaces  $\Omega (G/(T^k\times H))$ and $T^k\times \Omega( G/H)$ are $H$-spaces, the decomposition is given only in terms of topological spaces ignoring their $H$-space structure, that is, this decomposition is not realised by an $H$-equivalence (one that preserves the multiplicative structures of the spaces involved).
\end{proof}
\begin{corollary}
Let $G$ be a compact connected simple Lie group and $S$ a toral subgroup in $G$. Then $H_*(\Omega (G/S);\mathbb Z)$ is torsion free.
\end{corollary}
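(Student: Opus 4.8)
The plan is to reduce the statement to the classical fact that the based loop space of a compact, simply connected Lie group has torsion-free integral homology, and to bridge the gap using Proposition~\ref{simplyconnected} with the trivial subgroup in the role of $H$. Since $S$ is by definition a torus, I would write $S = T^k$ with $k \leq \rk G$, so that $G/S = G/(T^k\times\{e\})$ is precisely a quotient of the type appearing in the proposition once we set $H=\{e\}$.

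First I would deal with the case in which $G$ is simply connected. Here $G/\{e\}=G$ is simply connected, $\{e\}$ is a closed connected subgroup, and $k \leq \rk G = \rk G - \rk\{e\}$, so Proposition~\ref{simplyconnected} applies and tells us that $H_*(\Omega(G/S);\Z)$ is torsion free if and only if $H_*(\Omega G;\Z)$ is. The whole assertion thus collapses to the torsion-freeness of $H_*(\Omega G;\Z)$.

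The case of a non-simply-connected $G$ --- which really does occur among the examples, for instance $SO(2n+2)$ --- I would handle by passing to the universal cover $p\colon \tilde G \to G$, a compact simply connected simple Lie group whose kernel $\pi_1(G)$ is finite and central. Taking $\tilde S$ to be the identity component of $p^{-1}(S)$, a torus in $\tilde G$, the induced map $\tilde G/\tilde S \to G/S$ is a finite covering with simply connected total space, hence the universal cover of $G/S$. The standard decomposition of a loop space over its components then gives $\Omega(G/S)\simeq \pi_1(G/S)\times\Omega(\tilde G/\tilde S)$ as spaces, so $H_*(\Omega(G/S);\Z)$ is torsion free exactly when $H_*(\Omega(\tilde G/\tilde S);\Z)$ is; and $\tilde G/\tilde S$ is covered by the simply connected case already treated.

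The one genuinely substantial ingredient --- and the step I expect to carry all the weight --- is the torsion-freeness of $H_*(\Omega G;\Z)$ for $G$ compact and simply connected. This is Bott's theorem: a Morse-theoretic study of the energy functional on $\Omega G$ shows that it has the homotopy type of a CW complex with cells only in even dimensions, so its integral homology is free abelian. Everything else in the argument is formal, consisting of the homotopy equivalences provided by Proposition~\ref{simplyconnected} and by the universal-cover reduction, so the real content of the corollary is this classical structural result about loop spaces of Lie groups.
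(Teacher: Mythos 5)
Your proof is correct, but it takes a genuinely different route from the paper's. The paper anchors the argument in its predecessor \cite{GT}: since a subtorus is always a direct factor of any torus containing it (the quotient of the integral lattices is free, so the lattice sequence splits), one can complete $S$ to a maximal torus $T\cong T^m\times S$ and apply Proposition~\ref{simplyconnected} with $H=S$, reducing the corollary to the result of \cite{GT} that $H_*(\Omega(G/T);\Z)$ is torsion free for $T$ maximal; for the non-simply-connected case the paper asserts that $SO(n)$ is the only such group and proves by induction, using the fibration $SO(2k)/T^k\to SO(n)/T^k\to SO(n)/SO(2k)$, that $SO(n)/T^k$ is simply connected, so that the proposition still applies. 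You instead take $H=\{e\}$ in Proposition~\ref{simplyconnected} (legitimate: $G/\{e\}=G$ is simply connected in your first case, which is exactly why you need the case split), reducing directly to Bott's theorem \cite{BT} that $\Omega G$ has the homotopy type of a CW complex with only even cells for $G$ compact simply connected, and you dispose of the non-simply-connected case by the universal-cover decomposition $\Omega(G/S)\simeq \pi_1(G/S)\times\Omega(\tilde{G}/\tilde{S})$, with $\tilde{S}$ the identity component of $p^{-1}(S)$; your verifications there (that $\tilde{S}$ is a torus, that $\tilde{G}/\tilde{S}\to G/S$ is a finite cover with simply connected total space, and that torsion-freeness passes across the component decomposition) are all sound. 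Your version buys two things: it is more self-contained, resting on Bott's classical theorem rather than on the flag-manifold computation of \cite{GT}; and it is more robust in the second case, since it handles every non-simply-connected compact connected simple group uniformly and never needs $G/S$ itself to be simply connected. In fact the paper's claim that $SO(n)$ is the only non-simply-connected compact connected simple Lie group overlooks the adjoint forms such as $PSU(n)$ --- harmless for the specific generalised symmetric spaces treated in the paper, but a gap for the corollary as literally stated, which your covering argument closes for free. What the paper's route buys in exchange is alignment with the rest of the article, where the maximal-torus case of \cite{GT} is the base computation used throughout.
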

\begin{proof}
In~\cite{GT} the authors showed that if $T$ is a maximal torus in a compact connected Lie group then $H_*(\Omega(G/T);\Z)$ is torsion free. For $G$ simply connected, $G/S$ is simply connected as well and the statement follows by Proposition~\ref{simplyconnected}. 

The only compact connected simple Lie group which is not simply connected is $SO(n)$. We show that $SO(n)/T^k$, where $1\leq k\leq [\frac{n}{2}]$ is simply connected by induction. There is a fibration sequence  
\[
SO(2k)/T^{k}\lra SO(n)/T^k\lra SO(n)/SO(2k) .
\]
Since  $\pi _{i}(V_{n, 2k})=0$ for $0\leq i\leq 2k$ and $SO(2k)/T^k$ is simply connected (see for example~\cite{MT},~\cite{GT}), we conclude that $SO(n)/T^k$ is simply connected. Thus the statement for $\Omega (SO(n)/S)$ holds.
\end{proof}

\section{The rational homology of loop spaces}\label{s-rational}

In this section we compute rational homology of the based loop space on  generalised symmetric space $G/S$ of a simple compact Lie group $G$ having toral isotropy subgroup $S$ and zero Euler characteristic. In these generalised symmetric spaces, $S$ is a non-maximal toral subgroup, that is, $\rk S<\rk G$, which can be obtained as the stationary subgroup of an outer automorphism of the group $G$.  These generalised symmetric spaces are listed in~\cite{TFP} as: $SU(2n+1)/T^n$,  $SU(2n)/T^n$,  $SO(2n+2)/T^n$,  $SO(8)/T^2$, and $E_6/T^4$. They are obtained by   the outer automorphisms of the groups $SU(2n+1)$, $SU(2n)$, $SO(2n+2)$, $SO(8)$ and $E_6$ having respectively the orders $2n+1, 2n-1, 2n, 12$ and $18$. 

\subsection{The rational cohomology of homogeneous spaces.}\label{Coh} We recall some classical results on the cohomology of homogeneous spaces with rational coefficients. Consider a homogeneous space $G/H$, where $G$ is a compact connected Lie group and $H$ its connected closed subgroup.
The Hopf theorem~\cite{Borel} states that the 
rational cohomology algebra of $G$ is an exterior algebra
\[
H^{*}(G,\Q )\cong \wedge (z_1,\ldots ,z_n)
\]
where $z_1,\ldots, z_n$ are the universal transgressive generators in degree $\deg z_i=2k_i-1$, where $k_1,\ldots, k_n$ are the exponents of the group $G$ (see~\cite{cox}) and $n = \rk G$ denotes the dimension of the maximal torus in $G$. 
Recall~\cite{Borel} that for a fibration $F\lra E\lra B$, we can define a map $\tau$ called transgression which maps a subgroup of $H^*(F)$ to a quotient of $H^*(B)$. If we consider the universal $G$-bundle $G\lra EG\lra BG$, then elements $x\in H^*(G)$ on which the transgression is defined are said to be universal transgressive.

The rational cohomology algebra of the classifying space $BG$ of a group $G$ is the algebra of polynomials on the maximal abelian subalgebra $\TT$ of the Lie algebra $\gg$ for $G$ which are 
invariant under the action of the Weyl group $W_{G}$, that is,
$$H^{*}(BG,\Q )\cong \Q [\TT ]^{W_{G}}.$$ 
This algebra is generated by the polynomials $P_1,\ldots ,P_n$ called Weyl invariant generators, which correspond to $z_1,\ldots ,z_n$ by transgression in the universal bundle for $G$ and thus have degree $\deg P_i = 2k_i$, $1\leq i\leq n$ (see~\cite{Borel}).

Denote by $\s \subset \TT$ the maximal abelian subalgebra of the Lie algebra $\hh$ for $H$. Since $W_{H}\subset W_{G}$, the polynomials from $H^{*}(BG,\Q )$ when restricted to $\s$ belong to $H^{*}(BH,\Q )$.

The {\it Cartan algebra} for a homogeneous space  $G/H$ (see~\cite{Borel}) is a differential graded algebra $(C,d)$ defined by:
$$
C=H^{*}(BH,\Q)\otimes H^{*}(G,\Q),\;\;  
d(b\otimes 1)=0, \;\; d(1\otimes z_{i})= \rho ^{*}(P_{i})\otimes 1
$$
where $\rho ^{*} : \Q [\TT ]^{W_{G}}\to \Q [\s ]^{W_{H}}$ denotes the restriction. 

By  the famous Cartan theorem~\cite{Borel}, the Cartan algebra determines the rational cohomology of $G/H$, that is,
$$H^{*}(G/H,\Q )\cong H^{*}(C, d).$$

There is a  wide class of homogeneous spaces, called Cartan pair homogeneous spaces in terminology of~\cite{GHV} or normal position homogeneous spaces in terminology of~\cite{DK}, which behave nicely from the point of view of both rational cohomology and  rational homotopy  theory.
A homogeneous space $G/H$ is a Cartan pair homogeneous space if one can choose $n$ algebraically independent generators $P_1,\ldots ,P_n \in \Q [\TT]^{W_G}$ such that $\rho ^{*}(P_{r+1}),\ldots ,\rho ^{*}(P_n)$ belong to the ideal in $\Q [\s ]^{W_H}$  generated by $\rho ^{*}(P_1),\ldots ,\rho^{*}(P_r)$. Furthermore, when this is the case  one can choose $P_{r+1},\ldots ,P_{n}$ such that $\rho ^{*}(P_{r+1})=\ldots =\rho ^{*}(P_n)=0$. Then the Cartan theorem directly implies that the rational cohomology algebra for these spaces is given by
\begin{equation}\label{cpc}
H^{*}(G/H;\Q) \cong H^{*}(BH,\Q)/\left\langle \rho ^{*}(P_1),\ldots ,\rho^{*}(P_r) \right\rangle \otimes \wedge (z_{r+1},\ldots z_n).
\end{equation}
In this case the sequence $\rho ^{*}(P_1),\ldots ,\rho ^{*}(P_r)$ is regular, that is, the class $[\rho ^{*}(P_i)]$ in\\ $H^*(BH;\Q)/(\rho ^{*}(P_1),\dots, \rho ^{*}(P_{i-1})$ is not a zero divisor. We can further assume that algebra~\eqref{cpc} is reduced. Thus we can consider only those elements of the regular sequence which are decomposable, eliminating the generators in $H^{*}(BH,\Q )$  which are linear combinations of $\rho ^{*}(P_1),\ldots ,\rho ^{*}(P_r)$.

Among examples of Cartan pair homogeneous spaces are homogeneous spaces of positive Euler characteristic,
compact symmetric spaces~\cite{Borel},~\cite{Takeuchi} and generalised symmetric spaces~\cite{TFP}.

\subsection{On minimal model theory}(see~\cite{FHT}) \label{MinMod} 
Let $(A, d_{A})$ be a commutative graded differential algebra over the real numbers.  A differential graded algebra 
$(\mu _{A}, d)$ is called {\it minimal model} for $(A, d_{A})$ if
\begin{itemize}
\item [(i)] there exists differential graded algebra morphism $h_{A}\colon(\mu _{A}, d)\lra (A, d_{A})$ inducing an isomorphism in their cohomology algebras (such $h_{A}$ is called quasi-isomorphism);
\item[(ii)] $(\mu _{A}, d)$ is a free algebra in the sense that $\mu _{A} = \wedge V$ is an exterior algebra over graded vector space $V$;
\item[(iii)] differential $d$ is indecomposable meaning that for a fixed set $V =\{ P_{\alpha},\alpha \in I \}$ of free generators of $\mu _{A}$  for any $P_{\alpha}\in V$, $d(P_{\alpha})$ is a polynomial in generators $P_{\beta}$ with 
no linear part.
\end{itemize} 

Two algebras are  said to be {\it weakly equivalent} if there exists quasi-isomorphism between them. This is equivalent to say that these algebras have isomorphic minimal models. The algebra $(A, d_{A})$ is said to be {\it formal} if it is weakly equivalent to the algebra $(H^{*}(A), 0)$.

For a smooth connected  manifold $M$, the minimal model is by definition the minimal model of its de Rham algebra  of differential forms $\Omega _{DR}(M)$. In the case when $M$ is simply connected manifold its minimal model completely classifies its rational homotopy type. The manifold $M$ is said to be formal (in the sense of Sullivan) if $\Omega _{DR}(M)$ is a formal algebra.

It is a classical result~\cite{ON} that compact homogeneous space  is formal if and only if it is a Cartan pair homogeneous space. In particular, all generalised symmetric spaces are formal in the sense of Sullivan~\cite{KT}. Thus the minimal model for Cartan pair homogeneous space $G/H$(see~\cite{BG}) is given by $\mu =(\wedge V, d)$ where
\begin{equation}
V= (u_1,\ldots ,u_l, v_1,\ldots ,v_k,z_{r+1},\ldots ,z_{n})
\end{equation}
\begin{equation} 
d(u_i)=0,\;\; d(v_j)=\rho ^{*}(P_j),\;\; d(z_i)=0
\end{equation}
with $u_i$ correspond to the remaining generators of $H^{*}(BH;\Q )$, while $v_j$ correspond to the remaining elements of the sequence $\rho ^{*}(P_1),\ldots ,\rho ^{*}(P_r)$.

\subsection{The loop space rational homology.}
Let $\mu=(\Lambda V, d)$ be a  Sullivan minimal model of a simply connected topological space $M$ with the rational homology of finite type.  Then $d\colon V\lra \Lambda^{\geq 2}V$ can be
decomposed as $d=d_1+d_2+\cdots$, where $d_i\colon V\lra\Lambda^{\geq i+1}V$. In particular, $d_1$ is called the \emph{quadratic part} of the differential $d$.

The homotopy Lie algebra $\L$ of $\mu$ is defined in the following
way. The underlying  graded vector space $L$ is given by 
\[
sL=\Hom(V, \Q)
\]
where  $sL$ denotes the usual suspension  defined by $(sL)_i=(L)_{i-1}$.
We can define a pairing $\langle \, ; \, \rangle\colon V\times sL
\lra \Q$ by $\langle v;sx\rangle =(-1)^{\deg v}sx(v)$ and extend it
to $(k+1)$-linear maps
\[
\Lambda^kV\times sL\times\cdots\times sL\lra \Q
\]
by letting
\[
\langle v_1\wedge\cdots\wedge v_k; sx_k,\ldots
,sx_1\rangle=\sum_{\sigma\in S_k}\epsilon_\sigma\langle
v_{\sigma(1)}; sx_1\rangle\cdots\langle v_{\sigma(k)};sx_k\rangle
\]
where $S_k$ is the symmetric group  and $\epsilon_{\sigma}=\pm 1$  are determined by
\[
v_{\sigma(1)}\wedge\cdots\wedge v_{\sigma(k)}=\epsilon_\sigma
v_1\wedge\cdots\wedge v_k.
\]
Then the space $L$
inherits a Lie bracket $[\, ,
 \, ]\colon L\times L\lra L$ from $d_1$ uniquely determined by
 \begin{equation}
\label{bracket}
 \langle v; s[x,y]\rangle= (-1)^{\deg y+1}\langle
 d_1v;sx,sy\rangle \quad \text{ for } x,y\in L, v\in V.
 \end{equation}
Denote by $\L$ the Lie algebra $(L, [\, ,\, ])$.

On the other hand in the category of topological spaces and continuous maps, we can define the Samelson products $[f,g]\colon S^{p+q}\lra\Omega M$ of maps $f\colon S^p\lra\Omega M$ and $g\colon S^q\lra \Omega M$ by the composite
\[
\ddr{S^p\wedge S^q}{f\wedge g}{\Omega M\wedge\Omega M}{c}{\Omega M}
\]
where $c$ is given by the multiplicative commutator, that is, $c(x,y)=x\cdot y\cdot x^{-1}\cdot y^{-1}$. Recall that  there is a  graded Lie algebra $L_M=(\pi_*(\Omega M)\otimes \Q;
[\, , \,])$ called the {\it rational homotopy Lie algebra of
$M$}, for which  the commutator $[\, , \,]$ is given by the Samelson product.
There is an isomorphism between the rational homotopy Lie algebra
$L_M$ and the homotopy Lie algebra $\L$ of $\mu$. Milnor and Moore [see Appendix in ~\cite{MM}] showed that for a path connected homotopy associative $H$-space with unit $G$, there is an isomorphism of Hopf algebras $U(\pi_*(G)\otimes\Q)\cong H_*(G; \Q)$. As loop multiplication is homotopy associative with unit, applying the Milnor and Moore theorem to our case,  it follows
that
\[
H_*(\Omega M; \Q)\cong U\L
\]
where $U\L$ is the universal enveloping algebra for $\L$. Further
on,
\[
U\L \cong T(L)/\langle xy-(-1)^{\deg x\deg y}yx-[x,y]\rangle.
\]
For a more detailed account of this construction see for
example~\cite{FHT}, Chapters  $12$ and $16$.

\subsection{The loop space of some generalised symmetric spaces.}

Recall from~\cite{TFP} that the rational cohomology of $SU(2n+1)/T^n$ is given by
\[
H^*(SU(2n+1)/T^n;\Q)\cong \Q [x_1,\ldots ,x_{n}]/\langle
P_2,P_4,\ldots ,P_{2n} \rangle \otimes \wedge (z_3, z_5,\ldots ,z_{2n+1}) ,
\]
where $P_{2i}=\sum_{j=1}^{n}x_j^{2i}$, $1\leq i\leq n$ and $\deg x_j=2$, $\deg z_{2j+1}=4j+1$.
Since  $SU(2n+1)/T^n$ is formal, the minimal model for $SU(2n+1)/T^n$ is the  minimal model for the commutative differential graded algebra $(H^*(M;\Q), d=0)$. It is given by
$\mu=(\Lambda V, d)$, where
\[
V=(x_1,\ldots ,x_{n},y_1,\ldots ,y_n,z_1,\ldots ,z_n) \] with $\deg u_{i}=2$,
$\deg y_i=4i-1$ and $\deg z_i=4i+1$ for  $1\leq i\leq n \ $.

The differential $d$ is defined by
\begin{equation}\label{differential}
d(x_{i})=d(z_{i})=0, \quad d(y_{i})=\sum _{j=1}^{n}x_{j}^{2i}\ .
\end{equation}
\begin{theorem}
The rational homology ring of the loop space on the 
manifold $SU(2n+1)/T^n$ is
\begin{equation}\label{QSU(2n+1)} 
H_*(\Omega ( SU(2n+1)/T^n); \Q) \cong
\end{equation}
\[
\indent \left( T(a_1,\ldots ,a_n)/\left\langle
a_{1}^2=\ldots =a_n^2, a_ia_j=-a_ja_i\, |\, 1\leq i,j\leq n \right\rangle\right)
\otimes \Q[b_2,\ldots ,b_n,c_1,\ldots ,c_n]
\]
where the generators $a_{i}$ are of degree $1$ for $1\leq i\leq n$,
the generators $b_{j}$ are of degree $4j-2$ for $2\leq j\leq n$, and the generators
$c_k$ are of degree $4k$ for $1\leq k\leq n$.
\end{theorem}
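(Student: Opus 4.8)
The plan is to use the Milnor--Moore isomorphism $H_*(\Omega(SU(2n+1)/T^n);\Q)\cong U\L$ recorded above and to extract the homotopy Lie algebra $\L$ directly from the minimal model $\mu=(\Lambda V,d)$, and then to compute its universal enveloping algebra. First I would record the underlying graded vector space $L$. Since $sL=\Hom(V,\Q)$, a generator of $V$ in degree $m$ produces a dual generator of $L$ in degree $m-1$. Writing $a_i,\beta_i,c_i$ for the elements of $L$ dual to $x_i,y_i,z_i$, this gives $\deg a_i=1$, $\deg\beta_i=4i-2$ and $\deg c_i=4i$ for $1\le i\le n$.

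Next I would isolate the quadratic part $d_1$ of the differential. By~\eqref{differential} the only generator whose differential has a quadratic component is $y_1$, with $d_1(y_1)=\sum_{j=1}^n x_j^2$; for $i\ge 2$ the element $d(y_i)=\sum_j x_j^{2i}$ lies in $\Lambda^{2i}V$ and so contributes nothing to $d_1$, while $d_1(x_i)=d_1(z_i)=0$. Substituting $d_1(y_1)$ into the bracket formula~\eqref{bracket} and noting that $\sum_j x_j^2$ has no cross terms $x_ix_j$ with $i\neq j$, I expect to find $[a_i,a_j]=0$ for $i\neq j$ and $[a_i,a_i]$ equal to one fixed nonzero multiple of $\beta_1$ (the dual of $y_1$) for every $i$; all brackets involving some $\beta_i$ or $c_i$ vanish, since no generator of $V$ has a quadratic differential containing a $y$- or $z$-factor. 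Consequently $\L$ splits as a direct sum of graded Lie algebras $\L=\L'\oplus\mathcal{A}$, where $\L'$ is spanned by $a_1,\dots,a_n$ together with the central element $\beta_1$, and $\mathcal{A}$ is the abelian Lie algebra on the central generators $\beta_2,\dots,\beta_n,c_1,\dots,c_n$.

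I would then compute the two enveloping algebras separately and use $U\L\cong U\L'\otimes U\mathcal{A}$. For $\mathcal{A}$ all generators have even degree, so $U\mathcal{A}\cong\Q[\beta_2,\dots,\beta_n,c_1,\dots,c_n]$; relabelling $b_j:=\beta_j$ yields the polynomial factor of~\eqref{QSU(2n+1)} with the stated degrees $4j-2$ and $4k$. For $\L'$ the defining relation of the enveloping algebra is $a_ia_j-(-1)^{\deg a_i\deg a_j}a_ja_i=[a_i,a_j]$; since the $a_i$ are odd this reads $a_ia_j+a_ja_i=[a_i,a_j]$, giving $a_ia_j=-a_ja_i$ for $i\neq j$ and $2a_i^2=[a_i,a_i]$, hence $a_1^2=\dots=a_n^2$ (all equal to the same multiple of $\beta_1$). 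Thus $\beta_1$ is redundant and can be eliminated by setting it equal to $a_1^2$, which one checks is automatically central. This identifies $U\L'$ with $T(a_1,\dots,a_n)/\langle a_1^2=\dots=a_n^2,\ a_ia_j=-a_ja_i\rangle$, and tensoring the two factors produces~\eqref{QSU(2n+1)}.

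The step I expect to be the main obstacle is the sign and coefficient bookkeeping in the bracket computation: reading the brackets $[a_i,a_j]$ off $d_1(y_1)$ through the multilinear pairing requires expanding $\langle\sum_j x_j^2;sa_k,sa_l\rangle$ with the permutation signs $\epsilon_\sigma$ and the sign $(-1)^{\deg v}$ in the pairing, and then confirming that all the self-brackets $[a_i,a_i]$ are the \emph{same} element of $L$ rather than $n$ independent ones. It is precisely this coincidence that forces the single relation $a_1^2=\dots=a_n^2$ in place of $n$ unrelated squares; the remaining verifications, that $\beta_1=a_1^2$ is central and that all degrees match, are routine.
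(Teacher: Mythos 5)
Your proposal is correct and follows essentially the same route as the paper: extracting the homotopy Lie algebra from the quadratic part $d_1(y_1)=\sum_j x_j^2$ of the minimal model differential, computing the brackets via the pairing~\eqref{bracket} (in particular that every $[a_i,a_i]$ equals the same multiple of the dual of $y_1$, while all other brackets vanish), and then identifying $U\L$ with the stated ring via Milnor--Moore. Your explicit splitting $\L=\L'\oplus\mathcal{A}$ with $U\L\cong U\L'\otimes U\mathcal{A}$ is only a mild repackaging of the paper's direct listing of relations in the tensor algebra, not a different argument.
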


\begin{proof}
The underlying vector space of the homotopy Lie algebra $\L$ of
$\mu$ is given by
\[
L=(a_1,\ldots ,a_n,b_1,\ldots ,b_n,c_1,\ldots ,c_n)\] where $\deg(a_i)=1,\
\deg(b_i)=4i-2$ and $\deg (c_i)=4i$ for $1\leq k\leq n$ .

In order to define Lie brackets we need the quadratic part $d_1$ of
the differential in the minimal model. In this case, using the
differential $d$ defined in~\eqref{differential}, the quadratic part
$d_1$ is given by
\[
d_1(x_i)=d_1(z_i)=0  \quad  d_1(y_1)=2\sum _{j=1}^{n}x_j^2  d_1(y_j)=0 \ \text{ for } k\neq 1 \ .
\]

By the defining property of the Lie bracket stated in~\eqref{bracket}, we have

\[
\langle y_1, s[a_i,a_i]\rangle=\left\langle \sum x_j^2; sa_i, sa_i\right\rangle =\langle x_i^2; sa_i,sa_i\rangle =2
\]
\[
[a_i, b_j]=[a_i, c_j]=[b_i, b_j]=[b_i, c_j]=[c_i,c_j]=0 \text{ for } 1\leq i,j\leq n \text{ and}
\]
\[
[a_i,a_j]=0 \text{ for } i\neq j\ .
\]
Therefore in the tensor algebra $T(a_1,\ldots , a_n,b_1,\ldots
,b_n,c_1,\ldots ,c_n)$, the Lie brackets above induce the following relations
\[\begin{array}{ll}
a_ia_j+a_ja_i =0 & \text{ for }  1\leq i,j\leq n, i\neq j \\
a_i^2=b_1 & \text{ for }  1\leq i\leq n\\
a_ib_j=b_ja_i &  \text{ for }  1\leq i,j\leq n \\
a_ic_j=c_ja_i &  \text{ for }  1\leq i,j\leq n\\
b_ib_j=b_jb_i &  \text{ for }  1\leq i,j\leq n\\
b_ic_j=c_jb_i &  \text{ for }  1\leq i,j\leq n\\
c_ic_j=c_jc_i &  \text{ for }  1\leq i,j\leq n.  
\end{array}
\]
This proves the theorem.
\end{proof}
Since the rational cohomology algebra for $SU(2n)/T^n$ is given by
\[
H^*(SU(2n)/T^n;\Q)\cong \Q [x_1,\ldots ,x_{n}]/\langle
P_2,P_4,\ldots ,P_{2n} \rangle \otimes \wedge (z_3, z_5,\ldots ,z_{2n-1}) ,
\]
where $P_{2i}=\sum_{j=1}^{n}x_j^{2i}$, $1\leq i\leq n$ and $\deg x_j=2$, $\deg z_{2j+1}=4j+1$ for $1\leq j\leq n-1$
in the same way as in Theorem 3.1 we prove the following.
\begin{theorem}
The rational homology of the loop space on the 
manifold $SU(2n)/T^n$ is
\begin{equation}\label{QSU(2n)} 
H_*(\Omega ( SU(2n)/T^n); \Q) \cong
\end{equation}
\[
\indent \left( T(a_1,\ldots ,a_n)/\left\langle
a_{1}^2=\ldots =a_n^2, a_ia_j=-a_ja_i\, |\, 1\leq i,j\leq n \right\rangle\right)
\otimes \Q[b_2,\ldots ,b_n,c_1,\ldots ,c_{n-1}]
\]
where the generators $a_{i}$ are of degree $1$ for $1\leq i\leq n$,
the generators $b_{j}$ are of degree $4j-2$ for $2\leq j\leq n$, and the generators
$c_k$ are of degree $4k$ for $1\leq k\leq n-1$.\qed
\end{theorem}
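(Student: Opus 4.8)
The plan is to repeat the argument of Theorem~3.1 verbatim, changing only the range of generators to reflect the fact that $SU(2n)/T^n$ carries one fewer exterior cohomology class than $SU(2n+1)/T^n$. Since $SU(2n)/T^n$ is formal, its minimal model coincides with that of $(H^*(SU(2n)/T^n;\Q),0)$. Reading off the displayed cohomology algebra, whose polynomial part is $\Q[x_1,\ldots,x_n]/\langle P_2,\ldots,P_{2n}\rangle$ and whose exterior part is generated by $z_3,z_5,\ldots,z_{2n-1}$, I would take $\mu=(\Lambda V,d)$ with
\[
V=(x_1,\ldots,x_n,y_1,\ldots,y_n,z_1,\ldots,z_{n-1}),
\]
degrees $\deg x_i=2$, $\deg y_i=4i-1$, $\deg z_i=4i+1$, and differential $d(x_i)=d(z_i)=0$, $d(y_i)=\sum_{j=1}^n x_j^{2i}$. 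The only structural difference from the $SU(2n+1)/T^n$ case is that the $y$'s killing the relations $P_{2i}$ still run over $1\leq i\leq n$, whereas the exterior generators $z_i$ now run only over $1\leq i\leq n-1$.

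Next I would pass to the homotopy Lie algebra $\L$ of $\mu$. By the suspension duality $sL=\Hom(V,\Q)$, its underlying graded vector space is
\[
L=(a_1,\ldots,a_n,b_1,\ldots,b_n,c_1,\ldots,c_{n-1}),
\]
with $a_i,b_i,c_i$ dual to $x_i,y_i,z_i$ and hence of degrees $1$, $4i-2$, $4i$ respectively. The bracket is read off from the quadratic part $d_1$ of the differential through~\eqref{bracket}. The key observation, identical to Theorem~3.1, is that among the $d(y_i)=\sum_j x_j^{2i}$ only $d(y_1)=\sum_j x_j^2$ lies in $\Lambda^2 V$; for $i\geq 2$ the element $\sum_j x_j^{2i}$ sits in $\Lambda^{2i}V$ and contributes nothing to $d_1$. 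Consequently the only nonzero brackets are those dual to $y_1$: evaluating~\eqref{bracket} yields $\langle y_1;s[a_i,a_i]\rangle=2$ and $\langle y_1;s[a_i,a_j]\rangle=0$ for $i\neq j$, while every bracket involving a $b$ or a $c$ vanishes.

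Finally I would invoke the Milnor--Moore theorem in the form $H_*(\Omega(SU(2n)/T^n);\Q)\cong U\L$. Translating the brackets into the universal enveloping algebra, the relation $[a_i,a_i]=2b_1$ becomes $a_i^2=b_1$ for every $i$ (since a degree-one generator satisfies $2a_i^2=[a_i,a_i]$), and $[a_i,a_j]=0$ becomes $a_ia_j+a_ja_i=0$; all remaining generators are central and mutually commute. As $b_1=a_1^2=\cdots=a_n^2$ is now expressible through the $a_i$, I would eliminate it from the polynomial factor, leaving $\Q[b_2,\ldots,b_n,c_1,\ldots,c_{n-1}]$ tensored with the stated tensor-algebra factor.

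I expect no genuine obstacle, since the computation is formally identical to Theorem~3.1. The single point demanding care is the bookkeeping of generator ranges: the absence of the top exterior class $z_{2n+1}$ of $SU(2n+1)/T^n$ removes precisely the generator $c_n$, so that the polynomial factor terminates at $c_{n-1}$ rather than at $c_n$.
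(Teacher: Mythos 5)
Your proposal is correct and follows exactly the route the paper intends: the paper proves this theorem simply by declaring it analogous to Theorem~3.1, and your write-up fills in that analogous argument faithfully --- formality, the minimal model of $(H^*(SU(2n)/T^n;\Q),0)$, the observation that only $d(y_1)$ contributes to the quadratic part $d_1$, the resulting brackets $[a_i,a_i]=2b_1$ and $[a_i,a_j]=0$, and Milnor--Moore to pass to $U\L$ with $b_1$ eliminated via $b_1=a_i^2$. The bookkeeping you flag (the missing top exterior class removing exactly the generator $c_n$) is indeed the only difference from the $SU(2n+1)/T^n$ case, and you handle it correctly.
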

\begin{theorem}\label{SO(2n+2)}
\label{rationalgeneralsymm} The rational homology of the based loop
space on $SO(2n+2)/T^n$ is given by
\[
H_*(\Omega (SO(2n+2)/T^n); \Q)\cong
\]
\[
\left(T(a_1,\ldots ,a_n)/\left\langle \begin{array}{l}a_1^2=\ldots =a_n^2,\\
a_ka_l=-a_la_k \text{ for } k\neq
l\end{array}\right\rangle\right)\otimes \Q[b_2,\ldots , b_n,
b_{n+1}]
\]
where the generators $a_{i}$ are of degree $1$ for $1\leq i\leq n$,
the generators $b_{k}$ are of degree $4k-2$ for $2\leq k\leq n$,
and the generator $b_{n+1}$ is of degree $2n$.
\end{theorem}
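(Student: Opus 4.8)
The plan is to follow the template established in the proof of Theorem~3.1: begin with the rational cohomology of $SO(2n+2)/T^n$, pass to its Sullivan minimal model using formality, read off the homotopy Lie algebra $\L$ from the quadratic part of the differential, and finally invoke the Milnor--Moore theorem to identify $H_*(\Omega(SO(2n+2)/T^n);\Q)$ with the universal enveloping algebra $U\L$.

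First I would record the cohomology, as provided in~\cite{TFP}. The group $SO(2n+2)$ has type $D_{n+1}$ and rank $n+1$; its Weyl invariant generators may be taken to be power sums $P_2,P_4,\ldots,P_{2n}$ (of cohomological degrees $4,8,\ldots,4n$) together with the Pfaffian, of cohomological degree $2n+2$. The decisive structural input, which is what separates this case from the $SU$ cases of Theorems~3.1 and~3.2, is that under the restriction $\rho^*$ to the subalgebra $\s$ of the non-maximal torus $T^n$ the Pfaffian restricts to zero, while $\rho^*(P_{2j})=\sum_{i=1}^n x_i^{2j}$ for $1\le j\le n$ form a regular sequence in $\Q[x_1,\ldots,x_n]$. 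Consequently, by~\eqref{cpc},
\[
H^*(SO(2n+2)/T^n;\Q)\cong \Q[x_1,\ldots,x_n]/\langle P_2,P_4,\ldots,P_{2n}\rangle \otimes \wedge(z),
\]
with $\deg x_i=2$ and a single surviving exterior generator $z$ of degree $2n+1$ coming from the Pfaffian.

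Since $SO(2n+2)/T^n$ is a generalised symmetric space, hence formal, its minimal model is that of $(H^*;0)$, namely $\mu=(\wedge V,d)$ with $V=(x_1,\ldots,x_n,y_1,\ldots,y_n,z)$, $\deg y_j=4j-1$, $\deg z=2n+1$, and $d(x_i)=d(z)=0$, $d(y_j)=\sum_i x_i^{2j}$. As in Theorem~3.1, the quadratic part of the differential is concentrated in $d_1(y_1)=\sum_i x_i^2$, with $d_1(y_j)=0$ for $j\ge 2$ because $d(y_j)$ then has no quadratic component, and $d_1(z)=0$ since $d(z)=0$. Dualising via~\eqref{bracket} exactly as in Theorem~3.1, the homotopy Lie algebra $\L$ has generators $a_i$ (degree $1$), $b_j$ (degree $4j-2$) and a generator $b_{n+1}$ dual to $z$ (degree $2n$); the bracket $[a_i,a_i]$ arising from $d_1(y_1)$ produces the relations $a_i^2=b_1$ and $a_ia_j=-a_ja_i$ for $i\ne j$, while $b_{n+1}$ and all the $b_j$ are central.

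Applying the Milnor--Moore theorem then gives $H_*(\Omega(SO(2n+2)/T^n);\Q)\cong U\L$, and writing out the relations in the tensor algebra yields the stated ring, with $b_1$ absorbed into the common square $a_i^2$ so that the free polynomial generators are $b_2,\ldots,b_n,b_{n+1}$. I expect the main obstacle to be the first step: justifying that $\rho^*$ annihilates the Pfaffian while keeping $P_2,\ldots,P_{2n}$ regular. This reflects the specific embedding of the non-maximal torus $T^n$ as the fixed subgroup of the order-$2n$ outer automorphism of $SO(2n+2)$ (equivalently, $T^n$ sits inside the maximal torus $T^{n+1}$ as the locus where the last coordinate vanishes), and it is precisely the feature responsible for the extra degree-$2n$ generator $b_{n+1}$ which has no counterpart in the $SU$ computations.
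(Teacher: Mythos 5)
Your proposal is correct and follows essentially the same route as the paper: formality gives the minimal model of $(H^*(SO(2n+2)/T^n;\Q),0)$, the quadratic part of the differential is concentrated in $d_1(y_1)=\sum_i x_i^2$, the resulting homotopy Lie algebra has $[a_i,a_i]$ proportional to $b_1$ with all other brackets vanishing for dimensional reasons, and Milnor--Moore yields the stated $U\L$. The only difference is that you additionally justify the cohomological input (the Pfaffian of $D_{n+1}$ restricting to zero on $T^n\subset T^{n+1}$ via $x_{n+1}=0$, and the regularity of the restricted power sums), which the paper simply quotes from the literature.
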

\begin{proof}
The rational cohomology of $SO(2n+2)/T^n$ is given by
\[
 H^*(SO(2n+2)/T^n; \Q)\cong \big(\Q[x_1,\ldots ,x_n]/\langle P_2, P_4,\ldots ,
P_{2n}\rangle\big)\otimes \Lambda (z)
\]
where $P_{2k}=\sum^n_{i=1}x_i^{2k}$, $\deg (x_i)=2$ for $1\leq i\leq n$, $\deg (P_{2k})=4k$ for $2\leq k\leq n$, and $\deg (z)=2n+1$.
The minimal model for $SO(2n+2)/T^n$ is the minimal model for the
commutative differential algebra $(H^*(SO(2n+2)/T^n; \Q), d=0)$. It
is given by $\mu=(\Lambda V, d)$, where
\[
V=(x_1,\ldots ,x_{n},y_1,\ldots ,y_n, y_{n+1}),
\]
$\deg (x_{k})=2, \deg (y_k)=4k-1$ for $1\leq k\leq n$ and $\deg
(y_{n+1})=2n+1$.

The differential $d$ is given by
\begin{equation}\label{differential2}
d(x_{k})=0, \ d(y_{k})=P_{2k}=\sum _{i=1}^{n}x_{i}^{2k} \text{ for }
1\leq k\leq n \text{ and } d(y_{n+1})=0\ .
\end{equation}
Now the underlying vector space of the homotopy Lie algebra $L$ of
$\mu$ is given by
\[
L=(a_1,\ldots ,a_n,b_1,\ldots ,b_n, b_{n+1})
\]
where $\deg (a_k)=1,\ \deg (b_k)=4k-2$ for $1\leq k\leq n$ and $\deg
(b_{n+1})=2n$. In order to define Lie brackets we need the quadratic part $d_1$ of
the differential $d$ defined in~\eqref{differential2}. It is given by
\[
d_1(x_k)=0, \ d_1(y_1)=\sum _{i=1}^{n} x_i^2,\ d_{1}(y_k)=0 \ \text{
for }2\leq k\leq n \text{ and } d_1(y_{n+1})=0 .
\]
For dimensional reasons, we have
\[
[a_k, b_l]=[b_s, b_l]=0 \text{ for } 1\leq k\leq n \text{ and }
1\leq s,l\leq n+1.
\]
By the defining property of the Lie bracket stated
in~\eqref{bracket}, we have
\[
\langle y_1, s[a_k,a_k]\rangle=\left\langle \sum x_i^2; sa_k,
sa_k\right\rangle =\langle x_k^2; sa_k,sa_k\rangle =1 \text{ and }
\]
\[
\langle y_1, s[a_k,a_l]\rangle=0 \text{ for } k\neq l \
\]
resulting in the non-trivial commutators
\[
[a_k,a_k]=2b_1 \text{ for } 1\leq k\leq n \ .
\]
Therefore in the tensor algebra $T(a_1,\ldots , a_n,b_1,\ldots ,b_n,
b_{n+1})$ the Lie brackets above induce the following relations
\[
\begin{array}{ll}
  b_kb_l=b_lb_k&  \text{ for }  1\leq k,l\leq n+1 \\
  2a^2_k = b_1 & \text{ for } 1\leq k\leq n\\
  a_ka_l=-a_la_k & \text{ for } 1\leq k,l\leq n\\
  a_kb_l=b_la_k & \text{ for } 1\leq k\leq n \text{ and }1\leq l\leq
  n+1.
\end{array}
\]
Thus
\begin{equation}
 U\L  = \left( T(a_1,\ldots ,a_n)/\left\langle \begin{array}{l}a_1^2=\ldots =a_n^2,\\
a_ka_l=-a_la_k \text{ for } k\neq l\end{array}\right\rangle
\right)\otimes \Q[b_2,\ldots ,b_n, b_{n+1}] \ .
\end{equation}
This proves the theorem.
\end{proof}
 \begin{theorem}
The rational homology ring of the loop space on the 
manifold $SO(8)/T^2$ is
\begin{equation}\label{QSO(8)} 
H_*(\Omega ( SO(8)/T^2); \Q) \cong
\end{equation}
\[
\indent \left( T(a_1, a_2)/\left\langle
a_{1}^2=a_2^2= a_1a_2+a_2a_1 \right\rangle\right)
\otimes \Q[b_2,c_1,c_2]
\]
where the generators $a_{i}$ are of degree $1$ for $i=1,2$,
the generators $b_{2}$ is of degree $10$,  and the generators
$c_1,c_2$ are of degree $6$.
\end{theorem}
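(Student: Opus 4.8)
The plan is to run exactly the argument of Theorem~\ref{SO(2n+2)}, the single new feature being that the restricted degree-four invariant of $SO(8)$ now carries a non-trivial cross term. First I would record, from~\cite{TFP}, the rational cohomology of $SO(8)/T^2$ in its Cartan-pair form
\[
H^*(SO(8)/T^2;\Q)\cong \Q[x_1,x_2]/\langle \rho^*(P_1),\rho^*(P_4)\rangle \otimes \wedge(z_1,z_2),
\]
where $\deg x_i=2$, the regular sequence is given by the restrictions of the degree-four and degree-twelve Weyl invariants of $D_4$, and the two degree-eight invariants restrict to zero and so give rise to the exterior generators $z_1,z_2$ of degree $7$. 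As a consistency check, the complete-intersection factor has its socle in cohomological degree $(4+12)-(2+2)=12$ and the exterior factor contributes $2\cdot 7=14$, so the top class sits in degree $26=\dim SO(8)/T^2$; in particular $\rho^*(P_1),\rho^*(P_4)$ is a genuine system of parameters and the displayed algebra is correct.

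Invoking formality of generalised symmetric spaces (Subsection~\ref{MinMod}), I would then take the Sullivan minimal model $\mu=(\wedge V,d)$ of $(H^*(SO(8)/T^2;\Q),0)$ with
\[
V=(x_1,x_2,y_1,y_2,z_1,z_2),\quad \deg y_1=3,\ \deg y_2=11,\ \deg z_i=7,
\]
and $d(x_i)=d(z_i)=0$, $d(y_1)=\rho^*(P_1)$, $d(y_2)=\rho^*(P_4)$. Dualising and suspending as prescribed in the definition of the homotopy Lie algebra, $\L$ has underlying graded vector space $L=(a_1,a_2,b_1,b_2,c_1,c_2)$ with $\deg a_i=1$, $\deg b_1=2$, $\deg b_2=10$ and $\deg c_i=6$.

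The heart of the computation is the quadratic part $d_1$ of $d$. Since $\rho^*(P_4)$ is decomposable of polynomial degree six it contributes nothing to $d_1$, so all brackets among the $b_j,c_k$ and between these and the $a_i$ vanish for degree reasons. The decisive input is that, after normalising $x_1,x_2$ according to the triality embedding of the torus, the restricted quadratic invariant takes the form $d_1(y_1)=\rho^*(P_1)=x_1^2+x_1x_2+x_2^2$, a nondegenerate form whose diagonal and off-diagonal coefficients coincide. Evaluating the pairing~\eqref{bracket} gives $\langle y_1,s[a_k,a_k]\rangle=2$ and $\langle y_1,s[a_1,a_2]\rangle=1$, so the only non-trivial brackets are
\[
[a_1,a_1]=[a_2,a_2]=2b_1,\qquad [a_1,a_2]=b_1.
\]
Passing to the universal enveloping algebra through the Milnor--Moore isomorphism $H_*(\Omega(SO(8)/T^2);\Q)\cong U\L$, the generator $b_1$ is simultaneously identified with $a_1^2$, with $a_2^2$ and with the anticommutator $a_1a_2+a_2a_1$; it therefore ceases to be a free generator and leaves behind precisely the relation $a_1^2=a_2^2=a_1a_2+a_2a_1$, while $b_2,c_1,c_2$ survive as free mutually commuting polynomial generators. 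This yields the claimed isomorphism.

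I expect the only genuine obstacle to be justifying the exact shape of $\rho^*(P_1)$. In contrast with the form $\sum x_i^2$ occurring for $SO(2n+2)/T^n$, here $T^2$ is the fixed subtorus of the order-twelve triality automorphism, so the restriction of the lowest-degree invariant must be extracted from the explicit root and weight data of that embedding recorded in~\cite{TFP}; the point to verify is that the coefficient of $x_1x_2$ equals that of the squares, since this equality is exactly what forces the symmetric relation $a_1^2=a_2^2=a_1a_2+a_2a_1$ rather than the anticommutativity seen in the other cases. Once this is in hand, the remaining steps are a routine transcription of the $SO(2n+2)/T^n$ computation.
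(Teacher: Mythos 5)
Your proposal is correct and follows essentially the same route as the paper: formality, the minimal model of $(H^*(SO(8)/T^2;\Q),0)$ with $d(y_1)=x_1^2+x_1x_2+x_2^2$ and $d(y_2)$ the restricted degree-twelve invariant, the resulting brackets $[a_1,a_1]=[a_2,a_2]=2b_1$, $[a_1,a_2]=b_1$, and Milnor--Moore. The one input you flag as needing verification --- that the cross-term coefficient in $\rho^*(P_1)$ equals the diagonal ones --- is exactly what the paper imports from~\cite{TFP} (where $x_4=0$, $x_1=x_2+x_3$ on $T^2$ gives $\sum \hat{x}_i^2\mapsto 2(x_1^2+x_1x_2+x_2^2)$), so your argument is complete once that citation is invoked.
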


\begin{proof}
The rational cohomology for $SO(8)/T^2$  (see for example~\cite{TFP}) is given by
\[
H^{*}(SO(8)/T^2;\Q )\cong \Q[x_1,x_2]/\langle x_1^2+x_2^2+x_1x_2, (x_1+x_2)^2x_1^2x_2^2\rangle \otimes \wedge (z_2,z_4)
\]
implying that its minimal model is of the form $(\wedge V, d)$ where
$V=(x_1,x_2,y_1,y_2,z_1,z_2)$, $\deg x_i=2$, $\deg y_1=3$, $\deg y_2=11$, $\deg z_1=\deg z_2=7$ and
\[ d(x_i)=d(z_i)=0, \ d(y_1)=x_1^2+x_2^2+x_1x_2, \ d(y_2)=(x_1+x_2)^2x_1^2x_2^2 \ .
\]
Therefore its rational homotopy Lie algebra is given by $L=(a_1,a_2,b_1,b_2,c_1,c_2)$, $\deg a_i=1$, $\deg b_1=2$, $\deg b_2=10$, $\deg c_1=\deg c_2=6$, and
\[
[a_i,b_j]=[a_i,c_j]=[b_i,b_j]=[b_i,c_j]=[c_i,c_j]=0, \ [a_1,a_1]=[a_2,a_2]=2b_1,\ [a_1,a_2]=b_1 \ .
\]
 
These commutators imply the needed relations in $U\L$ and thus we obtain the ring structure of the rational homology for the based loop space on $SO(8)/T^2$.
\end{proof}
\begin{theorem}
The rational homology of the loop space on the 
manifold $E_6/T^4$ is given by
\begin{equation}
H^{*}(E_6/T^4;\Q )\cong
\end{equation}
\[
\indent \left (T(a_1,a_2,a_3,a_4)\left\langle a_1^2=a_2^2=a_3^2=a_4^2, a_ia_j=-a_ja_i\text { for } i\neq j \right\rangle \right )\otimes \Q [b_4,b_5,b_7,b_8,b_{11}],
\]
where $\deg a_i=1$, $1\leq i\leq 4$ and $\deg b_j=2j$, $j=4,5,7,8,11$.
\end{theorem}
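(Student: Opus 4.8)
The plan is to follow the strategy used for the preceding $SU$ and $SO$ cases. First I would recall from~\cite{TFP} the rational cohomology of $E_6/T^4$ as a Cartan pair homogeneous space. The group $E_6$ has rank $6$, with Weyl-invariant generators $P_i$ of degrees $4,10,12,16,18,24$ (equivalently, the exterior generators of $H^*(E_6;\Q)$ sit in degrees $3,9,11,15,17,23$). Since $\rk T^4=4$, exactly four of the restricted invariants $\rho^*(P_i)$ form a regular sequence in $\Q[x_1,x_2,x_3,x_4]$ and the remaining two survive as exterior generators, so by~\eqref{cpc}
\[
H^*(E_6/T^4;\Q)\cong \Q[x_1,x_2,x_3,x_4]/\langle \rho^*(P_{i_1}),\ldots,\rho^*(P_{i_4})\rangle\otimes\wedge(z,z').
\]
The feature I would extract carefully from~\cite{TFP} is that the quadratic Casimir invariant belongs to the regular sequence and restricts to $\sum_{k=1}^4 x_k^2$; this is forced because the invariant bilinear form stays nondegenerate on any subtorus, so it never survives into the exterior part.

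By formality (Subsection~\ref{MinMod}) the minimal model coincides with that of $(H^*(E_6/T^4;\Q),0)$. I would take $\mu=(\Lambda V,d)$ with
\[
V=(x_1,x_2,x_3,x_4,\,y_1,y_4,y_5,y_7,y_8,y_{11}),
\]
the subscripts on the $y$'s being the exponents of $E_6$, where $\deg x_k=2$, $\deg y_j=2j+1$, and $d(x_k)=0$ while $d(y_j)$ is the corresponding restricted invariant (for the four regular generators) or $0$ (for the two surviving ones). The decisive observation is that $d(y_1)=\sum_{k=1}^4 x_k^2$ is the only differential carrying a quadratic part, so
\[
d_1(y_1)=\sum_{k=1}^4 x_k^2,\qquad d_1(y_j)=0\ \text{ for } j\neq 1,
\]
since every higher invariant is a polynomial of degree $\geq 2$ in the $x_k$ and therefore contributes nothing to $d_1$.

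I would then form the homotopy Lie algebra $\L$ of Subsection~\ref{s-rational}, whose underlying space is
\[
L=(a_1,a_2,a_3,a_4,\,b_1,b_4,b_5,b_7,b_8,b_{11}),
\]
with $\deg a_k=1$ and $\deg b_j=2j$, dual to $V$ via $sL=\Hom(V,\Q)$. Applying the defining identity~\eqref{bracket} exactly as in the $SO(2n+2)/T^n$ computation, the term $\sum x_k^2$ in $d_1(y_1)$ produces the single family of nonzero brackets $[a_k,a_k]=2b_1$; the absence of cross terms $x_k x_l$ gives $[a_k,a_l]=0$ for $k\neq l$, and all brackets involving a $b_j$ vanish for degree reasons together with $d_1(y_j)=0$. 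Feeding these into the Milnor--Moore isomorphism $H_*(\Omega(E_6/T^4);\Q)\cong U\L$ turns $[a_k,a_k]=2b_1$ into the identification $a_1^2=a_2^2=a_3^2=a_4^2$, turns $[a_k,a_l]=0$ into $a_k a_l=-a_l a_k$, and leaves $b_4,b_5,b_7,b_8,b_{11}$ as central, mutually commuting generators, which is exactly the claimed isomorphism.

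The passage from the brackets to $U\L$ is routine and formally identical to the $SU$ and $SO$ arguments, so the main obstacle is the very first step: pinning down the rational cohomology of the exceptional space $E_6/T^4$. Concretely, the work is to determine which four of the six $E_6$ Weyl invariants restrict to a regular sequence on the Lie algebra of $T^4$ and to confirm that the quadratic invariant is among them with restriction $\sum_k x_k^2$; I would take this from the explicit description in~\cite{TFP}. Once the cohomology is fixed, the degrees $2j$ with $j\in\{4,5,7,8,11\}$ of the five polynomial generators are automatic, being the desuspensions of the five non-quadratic invariants of Weyl degrees $5,6,8,9,12$.
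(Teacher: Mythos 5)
Your proposal is correct and takes essentially the same route as the paper: formality plus the Cartan-pair presentation of $H^{*}(E_6/T^4;\Q)$ taken from~\cite{TFP} and~\cite{TMZ}, a minimal model in which only the generator dual to the quadratic invariant carries a quadratic differential, the resulting brackets $[a_k,a_k]=2b_1$ with all others vanishing (the paper's explicit restriction gives $d_1(v_1)=12(u_1^2+u_2^2+u_3^2+u_4^2)$ and hence $[a_i,a_i]=24b_1$, the same ring after rescaling $b_1$), and the Milnor--Moore identification with $U\L$. One small caution: your definiteness heuristic by itself only guarantees a nondegenerate form, which over $\Q$ diagonalises to $\sum c_k x_k^2$ with possibly unequal $c_k$ (and unequal $c_k$ would not obviously give $a_1^2=\cdots=a_4^2$), so the equal coefficients genuinely require the explicit embedding $x_1=-x_6$, $x_2=-x_5$, $x_3=-x_4$ from~\cite{TFP} --- which you correctly defer to, exactly as the paper does.
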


\begin{proof}
Let $T^6$ be the maximal torus in $E_6$ containing $T^4$. The Weyl invariant generating polynomials for $H^{*}(BE_6;\Q )$ may be taken to be $P_{k}=\sum_{i=1}^{6}(x_i\pm \epsilon )^k +\sum_{1\leq i<j\leq 6}(-x_i-x_j)^k$ for $k=2,5,6,8,9,12$. Here $x_1,\ldots ,x_6$ and $\epsilon$ denote the  canonical coordinates for $T^6$ in $E_6$ where $\sum_{i=1}^{6}x_i=0$ (see for example~\cite{ON}).
The rational cohomology algebra for $E_6/T^4$ is in~\cite{TMZ} given by
\[
H^{*}(E_6/T^4;\Q ) = H^{*}(BT^4;\Q )/\left\langle \rho ^{*}(P_2), \rho ^{*}(P_6),\rho ^{*}(P_8),\rho ^{*}(P_{12})\right\rangle\otimes \wedge (z_5,z_9)
\]
where $\rho ^{*}$ denotes the restriction from the maximal torus $T^6$ in $E_6$ to the torus $T^4$.  If $H_i$, $1\leq i\leq 6$ is Shevalley basis for  $T^6$, then the basis for $T^4$ in the case of generalised symmetric space $E_6/T^4$  is by~\cite{TFP} given by $\bar{H_1}=H_1+H_5$, $\bar{H_2}=H_2+H_4$, $\bar{H_3}=H_3$ and $\bar{H_4}=H_6$. Therefore it follows from~\cite{TMZ} that when restricted to $T^4$, $x_1,\ldots ,x_6$ satisfy  $x_1=-x_6$, $x_2=-x_5$, $x_3=-x_4$. 

Thus the minimal model for $E_6/T^4$ is given by
\[
V=(u_1,u_2,u_3,u_4,v_1,v_4,v_5,v_7,v_8, v_{11})
\]
where $\deg u_i=2$, $deg v_j=2j+1$, and $d(u_i)=0$, $d(v_4)=d(v_8)=0$ while $d(v_i)=\rho ^{*}(P_{2i+2})$ for $i=1,5,7,11$. It further implies that the quadratic part $d_1$ of the differential $d$ vanishes except $d_1(v_1)=12(u_1^2+u_2^2+u_3^2+u_4^2)$. Therefore for  the homotopy Lie algebra of this minimal model we obtain 
\[
L=(a_1,a_2,a_3,a_4,b_1,b_4,b_5,b_7,b_8,b_{11})
\]
where $\deg a_i=1$, $\deg b_j=2j$ and the brackets are given by
\[
[a_i,a_j]=[a_i,b_j]=[b_k,b_l]=0\;\; [a_i,a_i]=24b_1.
\]
It implies that $a_1^2=a_2^2=a_3^2=a_4^2=12b_1$ and $a_ia_j+a_ja_i=0$ for $i\neq j$.
\end{proof}   
 
 \section{Integral Pontrjagin homology}

In this section we study the integral Pontrjagin ring structure of generalised symmetric spaces 
$\Omega (G/S)$, where $G$ is a simple Lie group and $S$ an appropriate toral subgroup whose
rational homology we calculated in the previous
section. In addition to the rational homology calculation we make use of  the results from~\cite{BT}  and~\cite{N} on
integral homology of the identity component $\Omega _{0}G$ of the
loop space on $G$. Recall that for a compact connected Lie group $G$, the Pontrjagin homology  $H_{*}(\Omega _{0}G;\Q)$ is primitively generated, that is, it is generated as an algebra by its space of primitive elements.
It is well known that if $G$ is a simply connected Lie group, then
$\pi_{2}(G/S)\cong \Z ^{\dim S}$ and $\pi _{3}(G/S)\cong \Z$. Let $\alpha,\beta\colon S^2\lra G/S$ and let $\widetilde{\alpha}, \widetilde{\beta}\colon S^1\lra \Omega(G/S)$ denote their adjoints, respectively. The adjoint $W(\alpha, \beta)$ of the Samelson product $[\widetilde{\alpha},\widetilde{\beta}]\colon S^2\lra\Omega(G/S)$ is called the Whitehead product of $\alpha$ and $\beta$. Let 
\[
W\colon \pi _{2}(G/S)\otimes \pi _{2}(G/S)\to \pi _{3}(G/S)
\]
denote also the pairing given by the Whitehead product. In what follows,
we identify $H_{1}(S;\Z )$ with $\pi _{2}(G/S)$ and $H_{2}(\Omega
G;\Z )$ with $\pi _{3}(G/S)$ via natural homomorphisms. Thus since
there is no torsion in homology, and using the rational homology 
results from Section~\ref{s-rational}, we obtain that there is a split extension of
algebras
\[
\xymatrix{ 1 \ar[r] & H_*(\Omega G; \Z)\ar[r] &
H_*(\Omega(G/S);\Z )\ar[r] &H_*(S;\Z )\ar[r] &1}
\]
with the extension given by $[\alpha ,\beta]=W(\alpha ,\beta)\in H_{2}(\Omega G;\Z )$, where
$\alpha ,\beta \in H_{1}(S;\Z )$.

\subsection{The integral homology of $\Omega (SU(2n+1)/T^n)$ and $\Omega (SU(2n)/T^n$}
\begin{theorem}
\label{SU2n1integral} The integral Pontrjagin homology ring of the loop
space on $SU(2n+1)/T^n$ is
\[
H_*\left(\Omega (SU(2n+1)/T^n); \Z\right)\cong
\]
\[
\left( T(x_1,\ldots ,x_n)\otimes \Z[y_2,\ldots
,y_n, z_1,\ldots, z_n]\right)/\left\langle x_1^2=\ldots =x_n^2 , x_px_q+x_qx_p \text{ for }
1\leq p,q\leq n, p\neq q\right\rangle
\]
where the generators $x_1,\ldots ,x_n$ are of degree $1$, the
generators $y_i$ are of degree $4i-2$ for $2\leq i\leq n$,  the
generators $z_i$ are of degree $4i$ for $1\leq i\leq n$.
\end{theorem}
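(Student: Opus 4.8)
The plan is to combine three inputs already available: the torsion-freeness supplied by the Corollary of Section~2, the split extension of algebras
$1\to H_*(\Omega SU(2n+1);\Z)\to H_*(\Omega(SU(2n+1)/T^n);\Z)\to H_*(T^n;\Z)\to 1$
set up at the start of this section, and the rational computation of the preceding theorem. Since $H_*(\Omega(SU(2n+1)/T^n);\Z)$ is torsion free, the universal-coefficient map embeds it into its rationalisation, so no relation beyond the rational ones can occur and every rational relation lifts. Consequently the work reduces to exhibiting integral generators that span the correct lattice in each degree and to pinning down the behaviour in degree two.

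First I would identify the two ends of the extension. By~\cite{BT} and~\cite{N} the ring $H_*(\Omega SU(2n+1);\Z)$ is the (commutative, torsion-free) polynomial algebra $\Z[\beta_1,\ldots,\beta_n,\gamma_1,\ldots,\gamma_n]$ with $\deg\beta_j=4j-2$ and $\deg\gamma_k=4k$; here $\beta_1$ spans $H_2(\Omega SU(2n+1);\Z)\cong\Z\cong\pi_3(SU(2n+1))$. The quotient is $H_*(T^n;\Z)=\wedge(x_1,\ldots,x_n)$ with $\deg x_i=1$. As the extension splits as algebras, a section produces degree-one classes $x_i$ and exhibits $H_*(\Omega(SU(2n+1)/T^n);\Z)$ as generated by the $x_i$ together with the $\beta_j$ and $\gamma_k$; a parity argument (the $x_i$ are odd, the $\beta_j,\gamma_k$ even) forces the $x_i$ to commute with every $\beta_j$ and $\gamma_k$, exactly as in the rational calculation.

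The key, and hardest, step is to evaluate the extension data, namely the Whitehead pairing $W\colon\pi_2\otimes\pi_2\to\pi_3\cong\Z$ on the basis $x_1,\ldots,x_n$ of $H_1(T^n;\Z)=\pi_2(SU(2n+1)/T^n)$. Following the identification used in~\cite{GT}, $W$ is the symmetric bilinear form obtained by polarising the lowest-degree Weyl-invariant relation, which for this space is $P_2=\sum_{j=1}^n x_j^2$ (the degree-four relation recalled in Section~\ref{s-rational}). Reading off the coefficients gives $W(x_i,x_i)=\beta_1$ for every $i$ and $W(x_i,x_j)=0$ for $i\neq j$. The main obstacle is precisely the integrality check hidden in this assertion: one must verify that the diagonal entries of the Whitehead form are units, so that the common square $x_i^2$ equals the \emph{generator} $\beta_1$ of the degree-two part of the $\Omega SU(2n+1)$-factor and not a proper multiple of it, and that the off-diagonal entries vanish. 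It is the unit coefficients of $P_2=\sum_j x_j^2$ together with the integrality of the Whitehead product that make this work cleanly for $SU(2n+1)$, in contrast to the orthogonal cases where factors of $2$ intervene.

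Finally I would assemble the presentation. Feeding the Whitehead values into the extension formula $[\alpha,\beta]=W(\alpha,\beta)$ yields the commutators $x_i^2=\beta_1$ for all $i$, so that $x_1^2=\cdots=x_n^2$ with the common value absorbing the degree-two generator $\beta_1$, together with $x_px_q+x_qx_p=0$ for $p\neq q$. Relabelling $\beta_2,\ldots,\beta_n$ as $y_2,\ldots,y_n$ and $\gamma_1,\ldots,\gamma_n$ as $z_1,\ldots,z_n$ then gives a surjection of the presented ring $\big(T(x_1,\ldots,x_n)\otimes\Z[y_2,\ldots,y_n,z_1,\ldots,z_n]\big)/\langle x_1^2=\cdots=x_n^2,\ x_px_q+x_qx_p\rangle$ onto $H_*(\Omega(SU(2n+1)/T^n);\Z)$. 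Tensoring the source with $\Q$ recovers the rational ring of the preceding theorem, so a degreewise rank count shows the source has the same finite rank in each degree as the target; since the target is torsion free of that rank and all the defining relations already hold in it, the surjection is an isomorphism, which completes the argument.
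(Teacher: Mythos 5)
Your proposal is correct and follows essentially the same route as the paper's proof: torsion-freeness together with the monomorphism of split algebra extensions into the rational extension, the rational ring of the preceding theorem supplying all relations, and the degree-two extension datum $x_k^2=y_1$ absorbing the bottom polynomial generator of $H_*(\Omega SU(2n+1);\Z)$ so that the list starts at $y_2$. The only cosmetic differences are in bookkeeping: the paper routes the argument through the primitive elements $\sigma_k$ and the Newton formula, and it deduces the commutation of the $x_i$ with the even-degree generators from rational vanishing plus torsion-freeness (note your ``parity argument'' alone would not suffice in a noncommutative ring, though your embedding-into-rationalisation principle does), while you evaluate the Whitehead form directly from $P_2=\sum_j x_j^2$ --- which is exactly the integrality point the paper settles implicitly via the relation $x_k^2-\sigma_1\mapsto 0$.
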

\begin{proof}
We explain the extension of the algebra in more detail for the generalised symmetric space $SU(2n+1)/T^n$. Notice that
there is a monomorphism of two split extensions of algebras
\[
\xymatrix{ 1 \ar[r]\ar[d] & H_*(\Omega SU(2n+1); \Z)\ar[r]\ar[d] &
H_*(\Omega(SU(2n+1)/T^n);\Z )\ar[r]\ar[d] &H_*(T^n;\Z )\ar[r]\ar[d] &
1\ar[d]\\
1 \ar[r]& H_*(\Omega SU(2n+1); \Q)\ar[r] & H_*(\Omega(SU(2n+1)/T^n);\Q
)\ar[r] &H_*(T^n; \Q)\ar[r]&1.}
\]
Denote by $\bar{c}_2,\ldots ,\bar{c}_{2n+1}$ the universal
transgressive generators in $H^{*}(SU(2n+1);\Z )$ which map to the
symmetric polynomials $c_2=\sum\limits_{1\leq i<j\leq
2n+1}\hat{x}_i\hat{x}_j,\ldots ,c_{2n+1}=\hat{x}_1\cdots \hat{x}_{2n}\hat{x}_{2n+1}$ generating
$H^{*}(BSU(2n+1);\Z)$.  Let $T^{2n}$ be a maximal torus which contains $T^n$. The elements  $\hat{x}_1,\ldots ,\hat{x}_{2n}, \hat{x}_{2n+1}$ are
the integral generators of $H_{*}(T^{2n}; \Z)$ and
$\sum\limits_{i=1}^{2n+1}x_i=0$.  Recall from~\cite{TMZ} that $\hat{x}_{2i+1}=0$ when restricted to $T^n$ so that $\hat{x}_{2i}$ are integral generators in $H^{*}(T^n; \Z )$, where $1\leq i\leq n$.  We denote these generators $\hat{x}_{2i}$ by $x_i$, where $1\leq i\leq n$.

Now let $y_1,\ldots ,y_{2n}$ be the
integral generators of $H_{*}(\Omega SU(2n+1);\Z )$ obtained by the
transgression of the elements from $H_{*}(SU(2n+1);\Z )$ which are
the duals of $\bar{c}_2,\ldots ,\bar{c}_{2n+1}$. Further,
the set of primitive elements in $H_{*}(\Omega SU(2n+1);\Z )$ is
spanned by the elements $\sigma _1,\ldots ,\sigma _{2n}$ which can be
expressed in terms of $y_1,\ldots, y_{2n}$ using the Newton formula
\begin{equation}\label{Newton}
\sigma_{k}=\sum_{i=1}^{k-1}(-1)^{i-1}\sigma_{k-i}y_{i}+(-1)^{k-1}ky_{k},
\;\; 1\leq k\leq 2n.
\end{equation}
The integral elements $\sigma _1,\ldots ,\sigma _{2n}$ rationalise to
the elements $b_1, c_1,\ldots ,b_n,c_n\in H_{*}(\Omega SU(2n+1);\Q )$. The
generators $a_1,\ldots,a_n$ in $H_*(T^n;\Q )$ are the rationalised
images of the integral generators $x_1,\ldots, x_n$ in $H_{*}(T^n;\Z
)$. To decide the integral extension, we consider the rational
Pontrjagin ring structure~\eqref{QSU(2n+1)} of $\Omega (SU(2n+1)/T^n)$.
Looking at the  above commutative diagram of the algebra extensions,
we conclude that the integral elements
\[\begin{array}{ll}
x_kx_l+x_lx_k  & \text{ for }  1\leq k,l\leq n, k\neq l, \\
x_k^2-\sigma_1 & \text{ for }  1\leq k\leq n,\\
x_k\sigma_l-\sigma_lx_k &  \text{ for }  1\leq k\leq n, 1\leq l\leq 2n \\
\sigma_k\sigma_l-\sigma_l\sigma_k &  \text{ for }  1\leq k,l\leq 2n
\end{array}
\]
from $H_{*}(\Omega(SU(2n+1)/T^n);\Z )$ map to zero in
$H_*(\Omega(SU(2n+1)/T^n);\Q)$. As the map between the algebra
extensions is a monomorphism, we conclude that these integral
elements are zero. Using that there is no torsion in homology and
Newton formula~\eqref{Newton}, we have
\[
\begin{array}{ll}
x_kx_l+x_lx_k=0 & \text{ for } 1\leq k,l\leq n, k\neq l, \\
x_k^2=y_1 & \text{ for }  1\leq k\leq n,\\
x_ky_l-y_lx_k=0&  \text{ for }  1\leq k,l\leq n, \\
x_kz_l-z_lx_k=0&  \text{ for }  1\leq k,l\leq n, \\
y_ky_l-y_ly_k=0 &  \text{ for }  1\leq k,l\leq n\\
z_ky_l-y_lz_k=0 &  \text{ for }  1\leq k,l\leq n\\
z_kz_l-z_lz_k=0 &  \text{ for }  1\leq k,l\leq n
\end{array}
\]
which completely describes the integral Pontrjagin ring of
$\Omega(SU(2n+1)/T^n)$.
\end{proof}

In the analogues way we have the following result.
\begin{theorem}
\label{SU2n integral} The integral Pontrjagin homology ring of the loop
space on $SU(2n)/T^n$ is
\[
H_*\left(\Omega (SU(2n)/T^n); \Z\right)\cong
\]
\[
\left( T(x_1,\ldots ,x_n)\otimes \Z[y_2,\ldots
,y_n, z_1,\ldots, z_{n-1}]\right)/\left\langle x_1^2=\ldots =x_n^2 , x_px_q+x_qx_p \text{ for }
1\leq p,q\leq n, p\neq q\right\rangle
\]
where the generators $x_1,\ldots ,x_n$ are of degree $1$, the
generators $y_i$ are of degree $4i-2$ for $2\leq i\leq n$,  and the
generators $z_i$ are of degree $4i$ for $1\leq i\leq n-1$.\qed
\end{theorem}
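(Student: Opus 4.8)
The plan is to run the argument of Theorem~\ref{SU2n1integral} essentially unchanged, adjusting only the index bookkeeping forced by replacing $SU(2n+1)$ with $SU(2n)$. The backbone is the comparison of the integral split extension with its rationalisation,
\[
\xymatrix{ 1 \ar[r]\ar[d] & H_*(\Omega SU(2n); \Z)\ar[r]\ar[d] & H_*(\Omega(SU(2n)/T^n);\Z )\ar[r]\ar[d] &H_*(T^n;\Z )\ar[r]\ar[d] & 1\ar[d]\\ 1 \ar[r]& H_*(\Omega SU(2n); \Q)\ar[r] & H_*(\Omega(SU(2n)/T^n);\Q )\ar[r] &H_*(T^n; \Q)\ar[r]&1,}
\]
in which the vertical maps are monomorphisms. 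Injectivity of the middle map is exactly the torsion-freeness of $H_*(\Omega(SU(2n)/T^n);\Z)$ established in the Corollary of Section~2, and it is this that lets relations be checked after rationalising. The rational target is the ring~\eqref{QSU(2n)} computed above.

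I would then fix integral generators as in the odd case. Let $T^{2n-1}$ be a maximal torus containing $T^n$, with coordinates $\hat x_1,\ldots,\hat x_{2n}$ subject to $\sum_i\hat x_i=0$, and let $\bar c_2,\ldots,\bar c_{2n}$ be the universal transgressive generators of $H^*(SU(2n);\Z)$ dual to the elementary symmetric functions generating $H^*(BSU(2n);\Z)$. Restricting to $T^n$ leaves $n$ integral generators $x_1,\ldots,x_n$ of $H^*(T^n;\Z)$, just as before. Dually, by~\cite{BT} and~\cite{N} the ring $H_*(\Omega SU(2n);\Z)$ is polynomial on generators $y_1,\ldots,y_{2n-1}$, with primitive generators $\sigma_1,\ldots,\sigma_{2n-1}$ expressed through the Newton formula~\eqref{Newton} (now with upper range $2n-1$).

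The only genuine difference from Theorem~\ref{SU2n1integral} is that there are now $2n-1$ primitives rather than $2n$. Matching degrees, $\sigma_1\mapsto b_1$ and $\sigma_2,\ldots,\sigma_{2n-1}$ rationalise to $c_1,b_2,c_2,\ldots,b_n$; since the top primitive $\sigma_{2n-1}$ has degree $4n-2$ it lands on $b_n$ and \emph{not} on a further even generator, which is precisely why the $c$'s (equivalently the $z$'s) stop at index $n-1$. Reading off the relations of~\eqref{QSU(2n)} and pulling them back through the monomorphism shows that the integral elements $x_px_q+x_qx_p$ (for $p\neq q$), $\,x_k^2-\sigma_1$, $\,x_k\sigma_l-\sigma_lx_k$, and $\sigma_k\sigma_l-\sigma_l\sigma_k$ all map to zero over $\Q$, hence vanish; rewriting $\sigma_1=y_1$ and renaming the remaining primitives as $z_1,\ldots,z_{n-1}$ and $y_2,\ldots,y_n$ by degree yields the stated presentation. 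I anticipate no real obstacle here: all the conceptual work is already done in the odd case, and the sole point requiring care is the degree count verifying that $\sigma_{2n-1}$ produces $b_n$ rather than an extra generator $c_n$.
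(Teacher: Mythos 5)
Your proposal is correct and follows exactly the route the paper intends: the paper gives no separate proof for $SU(2n)/T^n$, stating only that it is ``analogous'' to Theorem~\ref{SU2n1integral}, and your argument is precisely that adaptation --- the same comparison of integral and rational split extensions via torsion-freeness, with the correct degree bookkeeping showing that the $2n-1$ primitives $\sigma_1,\ldots,\sigma_{2n-1}$ of $H_*(\Omega SU(2n);\Z)$ rationalise to $b_1,c_1,b_2,\ldots,c_{n-1},b_n$, so the even-degree generators stop at $z_{n-1}$. Your identification of the top primitive $\sigma_{2n-1}$ (degree $4n-2$) with $b_n$ is exactly the one index shift the paper leaves implicit, and your treatment of it is correct.
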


\subsection{The integral homology of $\Omega (SO(2n+2)/T^n)$ and $\Omega (SO(8)/T^2)$}
\begin{lemma}
\label{equivalence}
For $k\leq\rk SO(m)$, 
\[
Spin(m)/ T^k\simeq SO(m)/T^k.
\]
\end{lemma}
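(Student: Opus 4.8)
The plan is to play $SO(m)$ against its universal double cover and to reduce the statement to a connectivity question about the preimage of the torus. Write $p\colon Spin(m)\lra SO(m)$ for the universal covering homomorphism, a two-sheeted cover with central kernel $\Z/2$. Given the torus $T^k\subset SO(m)$, I would form $\widehat T:=p^{-1}(T^k)$, a closed abelian subgroup of $Spin(m)$ for which $p$ restricts to a surjection $\widehat T\to T^k$ with kernel $\Z/2$. The assignment $g\widehat T\mapsto p(g)T^k$ is then a well-defined continuous bijection $Spin(m)/\widehat T\lra SO(m)/T^k$, since $p(g_1)T^k=p(g_2)T^k$ holds precisely when $g_1^{-1}g_2\in p^{-1}(T^k)=\widehat T$; being a continuous bijection between compact Hausdorff spaces, it is a homeomorphism. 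Hence it suffices to show that $\widehat T$ is connected: a connected $\widehat T$ is a $k$-dimensional torus, and taking it as the copy of $T^k$ inside $Spin(m)$ yields $Spin(m)/T^k\cong SO(m)/T^k$.

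The key step is to recognise connectedness of $\widehat T$ as a condition on fundamental groups. The covering $\widehat T\to T^k$ is the pullback of $p$ along $T^k\hookrightarrow SO(m)$, so it is the double cover of the torus $T^k$ classified by the composite $\pi_1(T^k)\to\pi_1(SO(m))\cong\Z/2$. Such a pullback is connected exactly when this homomorphism is surjective, and otherwise $\widehat T\cong T^k\times\Z/2$ is disconnected. To evaluate the homomorphism I would run the long exact homotopy sequence of the bundle $T^k\hookrightarrow SO(m)\to SO(m)/T^k$, whose tail reads $\pi_1(T^k)\to\pi_1(SO(m))\to\pi_1(SO(m)/T^k)\to 0$; thus $\pi_1(SO(m)/T^k)$ is identified with the cokernel of $\pi_1(T^k)\to\pi_1(SO(m))$.

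Finally I would invoke the corollary above, where it is shown that $SO(m)/T^k$ is simply connected for $k\leq\rk SO(m)=[m/2]$. Vanishing of this cokernel forces $\pi_1(T^k)\to\pi_1(SO(m))$ to be surjective, so $\widehat T$ is connected and hence a $k$-torus; together with the homeomorphism of the first paragraph this gives $Spin(m)/T^k\cong SO(m)/T^k$ and proves the lemma, in fact strengthening the asserted homotopy equivalence to a homeomorphism. The only point requiring care is the equivalence between connectedness of the lifted torus and surjectivity on $\pi_1$; once that is set up correctly there is no genuine obstacle, since the needed simple connectivity is already in hand.
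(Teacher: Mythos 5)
Your proof is correct, and it takes a genuinely different (and somewhat stronger) route than the paper's. The paper argues homotopically: it assembles an extended diagram of fibrations comparing $\Z/2\lra Spin(m)\lra SO(m)$ with the double cover $S^1\lra S^1$ over a subgroup circle and the induced map $Spin(m)/S^1\lra SO(m)/S^1$, uses the simple connectivity of $SO(m)/T^k$ to conclude that the homotopy fibre $F$ of the top horizontal map is $\Z/2$, deduces that the homotopy fibre $K$ of the bottom map is contractible, and then passes from $S^1$ to $T^k$. You instead work at the group level and handle the whole torus at once: lifting to $\widehat T=p^{-1}(T^k)$, noting that the induced map of coset spaces is a continuous bijection of compact Hausdorff spaces, and reducing everything to connectedness of $\widehat T$, which you detect via surjectivity of $\pi_1(T^k)\lra\pi_1(SO(m))$ read off from the tail of the long exact sequence of $T^k\lra SO(m)\lra SO(m)/T^k$ together with the simple connectivity already established in the Corollary. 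Both proofs pivot on exactly the same input (simple connectivity of the quotient kills the $\Z/2$ ambiguity), but your version replaces the fibration-diagram chase by covering-space classification, identifies explicitly which torus in $Spin(m)$ is meant (the preimage $\widehat T$), and upgrades the asserted homotopy equivalence to a homeomorphism. Two small points to tighten: you assert that $\widehat T$ is abelian before connectedness is known, whereas a priori a central extension of $T^k$ by $\Z/2$ need not be abelian; this is harmless since you only use abelianness after connectedness, and a connected Lie group with abelian Lie algebra is abelian (alternatively, divisibility of $T^k$ forces the commutator pairing into the central $\Z/2$ to vanish, so $\widehat T$ is abelian in any case). Also, the identification $\pi_1(SO(m))\cong\Z/2$ requires $m\geq 3$, which is satisfied in all the cases where the paper invokes the lemma.
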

\begin{proof}
Consider the extended diagram of fibrations and cofibrations
\[
\xymatrix{ F\ar[d]\ar[r] & S^1\ar[r]\ar[d] & S^1\ar[d]\\
\mathbb{Z}/2\ar[r]\ar[d]& Spin(m)\ar[d]\ar[r] & SO(m)\ar[d]\\
K\ar[r] & Spin(m)/ S^1\ar[r] & SO(m)/S^1}
\]
where $F$ and $K$ are the homotopy fibers of the  corresponding horizontal maps.
Since $SO(m)/T^k$ is simply connected for any $k\leq\rk SO(m)$, we conclude that $F\simeq \mathbb{Z}/2$ and thus $K\simeq *$ proving that $ Spin(m)/ S^1\simeq SO(m)/S^1$ and from there that $ Spin(m)/ T^k\simeq SO(m)/T^k$ for $k\leq\rk SO(m)$.
\end{proof}
\begin{theorem}
The integral Pontrjagin homology ring of the loop
space on $SO(2n+2)/T^n$ is
\[
H_*\left(\Omega (SO(2n+2)/T^n); \Z\right)\cong
\]
\[
\left( T(x_1,\ldots ,x_n)\otimes \Z [y_1,\ldots, y_{n-1},y_{n}+z,
y_{n}-z,2y_{n+1},\ldots ,2y_{2n}]\right)/I
\]
where $I$ is generated by
\[
\begin{array}{l}x_1^2-y_1,x_i^2-x_{i+1}^2\; \text{ for }\; 1\leq i
\leq n-1\\
x_kx_l+x_lx_k\; \text{ for }\; k\neq l\\
y_i^2-2y_{i-1}y_{i+1}+2y_{i-2}y_{i+2}-\ldots \pm 2y_{2i}\;
 \text{ for }\; 1\leq i\leq n-1\\
(y_{n}+z)(y_{n}-z)-2y_{n}y_{n+2}+\ldots \pm 2y_{2n}
\end{array}
\]
where $\deg x_i =1$ for $1\leq i\leq n$, $\deg y_i=2i$ for $1\leq
i\leq n-1$, $\deg(y_{n}+z)=\deg(y_{n}-z)=2n$, $\deg
2y_{i}=2i$ for $n+1\leq i\leq 2n$ and $y_0=1$.
\end{theorem}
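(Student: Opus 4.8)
The plan is to follow the strategy of Theorem~\ref{SU2n1integral}, the two new ingredients being a passage to the universal cover and the more delicate integral structure of the loop homology of $Spin(2n+2)$. First I would replace $SO(2n+2)$ by its simply connected cover: by Lemma~\ref{equivalence} we have $Spin(2n+2)/T^n\simeq SO(2n+2)/T^n$, so it suffices to compute $H_*(\Omega(Spin(2n+2)/T^n);\Z)$. The advantage is that $Spin(2n+2)$ is simply connected, whence $Spin(2n+2)/T^n$ is simply connected, the identifications $\pi_2\cong H_1(T^n;\Z)$ and $\pi_3\cong H_2(\Omega Spin(2n+2);\Z)$ hold, and the split extension of algebras set up at the beginning of this section applies. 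Since $H_*(\Omega(Spin(2n+2)/T^n);\Z)$ is torsion free, rationalisation is injective, and I would work inside the commutative ladder
\[
\xymatrix{
1 \ar[r]\ar[d] & H_*(\Omega Spin(2n+2); \Z)\ar[r]\ar[d] & H_*(\Omega(Spin(2n+2)/T^n);\Z )\ar[r]\ar[d] & H_*(T^n;\Z )\ar[r]\ar[d] & 1\ar[d]\\
1 \ar[r] & H_*(\Omega Spin(2n+2); \Q)\ar[r] & H_*(\Omega(Spin(2n+2)/T^n);\Q )\ar[r] & H_*(T^n; \Q)\ar[r] & 1
}
\]
in which every vertical arrow is a monomorphism.

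Next I would pin down the integral generators on the two outer columns. On the right, the generators $x_1,\dots,x_n$ of $H_*(T^n;\Z)$ are the restrictions to $T^n$ of the generators of a maximal torus $T^{n+1}\subset Spin(2n+2)$, exactly as in the $SU$ case and using the coordinate identifications recorded in~\cite{TMZ} for this generalised symmetric space. On the left I would import from~\cite{BT} and~\cite{N} the integral Pontrjagin ring $H_*(\Omega Spin(2n+2);\Z)$: it is torsion free and primitively generated, carries classes $y_1,\dots,y_{2n}$ with $\deg y_i=2i$ together with the Euler-class generator $z$ of degree $2n$, and its primitive elements $\sigma_k$ are expressible through the $y_i$ by the Newton formula~\eqref{Newton}. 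The $D$-type phenomena -- that integrally one must use $y_n+z$ and $y_n-z$ in place of $y_n$ and $z$, and that only the doubled classes $2y_{n+1},\dots,2y_{2n}$ lie in the integral lattice -- reflect the relation between the Euler class and the top Pontrjagin class governing type $D_{n+1}$, and must be read off from~\cite{BT},~\cite{N}.

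With the two ends identified, the extension is dictated by the Whitehead product pairing $W\colon\pi_2\otimes\pi_2\to\pi_3$ described before Theorem~\ref{SU2n1integral}. Rationally this pairing is computed in Theorem~\ref{SO(2n+2)}, where $2a_k^2=b_1$ and $a_ka_l=-a_la_k$; lifting along the injective left and middle columns I would check that the candidate integral elements $x_k^2-y_1$ and $x_i^2-x_{i+1}^2$, the elements $x_kx_l+x_lx_k$ for $k\neq l$, the commutators of the $x_k$ with the $y_i$ and $z$, and the quadratic relations $y_i^2-2y_{i-1}y_{i+1}+\cdots\pm 2y_{2i}$ and $(y_n+z)(y_n-z)-2y_ny_{n+2}+\cdots\pm 2y_{2n}$ all map to zero rationally, and hence vanish integrally by injectivity. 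These quadratic relations are the integral incarnation of the primitivity of the $\sigma_k$ together with the Euler-class relation, and they are exactly what forces the doubling in the high-degree generators.

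Finally I would argue that the listed relations are complete. Since every module in sight is free of finite rank in each degree, it suffices to match the rank of the presented algebra with that of $H_*(\Omega(Spin(2n+2)/T^n);\Z)$ degree by degree, which follows by comparison with the rational answer of Theorem~\ref{SO(2n+2)} through the injective middle column. I expect the \emph{main obstacle} to be the correct integral bookkeeping at the Euler class: determining precisely which $\Z$-lattice the classes $y_n,z,y_{n+1},\dots,y_{2n}$ span inside $H_*(\Omega Spin(2n+2);\Z)$, and therefore where the factors of $2$ and the combinations $y_n\pm z$ come from, is the delicate point, and it is here that the inputs~\cite{BT} and~\cite{N} are indispensable.
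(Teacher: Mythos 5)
Your proposal follows essentially the same route as the paper's proof: pass to $Spin(2n+2)/T^n$ via Lemma~\ref{equivalence}, import Bott's description of $H_*(\Omega_0 SO(2n+2);\Z)$ with its generators $y_1,\dots,y_{n-1},y_n\pm z,2y_{n+1},\dots,2y_{2n}$ and quadratic relations, and determine the extension by comparing the integral and rational ladders of split algebra extensions using torsion-freeness and Theorem~\ref{SO(2n+2)}. The only cosmetic differences (citing~\cite{N}, which concerns $E_6$, alongside~\cite{BT}, and invoking the Newton formula where the paper lists the primitives $p_1,p_3,\dots,2z,\dots$ explicitly) do not affect the argument, which is correct at the same level of detail as the paper's.
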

\begin{proof}
Since $SO(2n+2)/T^n\simeq Spin(2n+2)/T^n$ by Lemma~\ref{equivalence}, we have that  $\Omega
(SO(2n+2)/T^n)\cong \Omega (Spin (2n+2)/T^n)$. It is known that
$\Omega Spin(2n+2)\cong\Omega _{0}SO(2n+2)$, see for
example~\cite{MT}.

Recall from~\cite{BT} that the algebra $H_{*}(\Omega _{0}SO(2n+2);\Z)$
is generated by the elements
$y_1,\ldots,y_{n-1}, y_{n}+z,y_{n}-z, 2y_{n+1},\ldots ,2y_{2n}$ which satisfy the relations
\[
y_i^2-2y_{i-1}y_{i+1}+2y_{i-2}y_{i+2}-\ldots \pm 2y_{2i} =0
\text{ for } 1\leq i\leq n-1,
\]
\[
(y_{n}+z)(y_{n}-z)-2y_{n}y_{n+2}+\ldots \pm 2y_{2n}=0.
\]
These relations eliminate $2y_{2i}$ as
generators for $[\frac{n+2}{2}]\leq i\leq n-1$, while for $1\leq
i\leq [\frac{n+2}{2}]-1$, they induce new relations on $y_{2i}$
implying that $y_{2i}$ are generators only in the homology of
$\Omega_{0}SO(2n+2)$ with coefficients where $2$ is not invertible.
The subspace of primitive elements in $H_{*}(\Omega _{0}SO(2n+2);\Z )$
is spanned by the elements $p_1,p_3,\ldots p_{n-1},2z,2p_{n+1},\ldots
,2p_{2n-1}$ for $n$ odd and by the elements
$p_1,p_3,\ldots,p_{n},2z,2p_{n+2},\ldots ,2p_{2(n-1)+1}$ for $n+1$
even. These primitive generators are obtained by transgressing the
elements in $H_{*}(SO(2n+2);\Z )$ which are the Poincare duals of the
universal transgressive generators $\bar{\sigma}_1,\ldots
,\bar{\sigma}_{n},\bar{\lambda}$  in $H^{*}(SO(2n+2);\Z )$. The generators
$\bar{\sigma}_1,\ldots ,\bar{\sigma}_{n},\bar{\lambda}$ map to the polynomials
$\sigma _{i}(x_1^2,\ldots ,x_{n+1}^2)$ for $1\leq i\leq n$ and
$\lambda =x_1\cdots x_{n+1}$ which generate the free part in
$H^{*}(BSO(2n+2);\Z)$. here $x_1,\ldots ,x_n$ are the generators for $H_{*}(T^{n+1}; \Z )$ where $T^{n+1}$ is the maximal torus for $SO(2n+2)$.
Recall from~\cite{TFP} that in the case of generalised symmetric space $SO(2n+2)/T^n$ we have
that $T^n$ is embedded in $T^{n+1}$ in a such a way that $x_{n+1}=0$ on $T^n$ and thus gives the morphism $H_{*}(\Omega _{0}SO(2n+2);\Q )\to H_{*}(\Omega (SO(2n+2)/T^n);\Q )$.  

The proof of the theorem is now analogue to the proof of Theorems 4.3 an 4.4 in~\cite{GT}. We consider the morphism of two extensions of algebras
\[
\xymatrix{H_*(\Omega _{0}SO(2n+2); \Z)\ar[r]\ar[d] &
H_*(\Omega(SO(2n+2)/T^n);\Z )\ar[r]\ar[d] &H_*(T^n;\Z )\ar[d]\\
H_*(\Omega_0SO(2n+2); \Q)\ar[r] & H_*(\Omega(SO(2n+2)/T^n);\Q
)\ar[r] &H_*(T^n; \Q).}
\]
and taking into account the rational homology calculations of $\Omega(SO(2n+2)/T^n)$ in Theorem~\ref{SO(2n+2)} and the above description of the algebra $H_{*}(\Omega _{0}SO(2n+2);\Z)$ we come to the result. 
\end{proof}

\begin{theorem}
 The integral Pontrjagin homology ring of the loop
space on $SO(8)/T^2$ is
\[
H_*\left(\Omega (SO(8)/T^2); \Z\right)\cong
\]
\[
\left( T(x_1,x_2)\otimes \Z [y_1, y_{2},y_{3}+z,
y_{3}-z,2y_{4},2y_{5},2y_{6}]\right)/I
\]
where $I$ is generated by
\[
\begin{array}{l}x_1^2-y_1,x_1^2-x_{2}^2\\
x_1^2-x_1x_2+x_2x_1\\
y_i^2-2y_{i-1}y_{i+1}+2y_{i-2}y_{i+2}-\ldots \pm 2y_{2i}\;
 \text{ for }\; 1\leq i\leq 2\\
(y_3+z)(y_3-z)-2y_3y_5+2y_6
\end{array}
\]
where $\deg x_1=\deg x_2 =1$, $\deg y_1=2$, $\deg y_2=4$, $\deg(y_3+z)=\deg(y_3-z)=6$, $\deg
2y_{i}=2i$ for $4\leq i\leq 6$ and $y_0=1$.\qed
\end{theorem}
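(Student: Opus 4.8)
The plan is to mirror the argument given for $SO(2n+2)/T^n$ in the preceding theorem, specialised to the rank data of $SO(8)$ but carried out for the triality embedding of $T^2$. First I would pass to the universal cover: by Lemma~\ref{equivalence} we have $SO(8)/T^2\simeq Spin(8)/T^2$, hence $\Omega(SO(8)/T^2)\simeq\Omega(Spin(8)/T^2)$, and since $\Omega Spin(8)\cong\Omega_0 SO(8)$ (see~\cite{MT}) the fibre term of the relevant extension is $H_*(\Omega_0 SO(8);\Z)$. This is the $n=3$ instance of the algebra described in~\cite{BT}: it is generated by $y_1,y_2,y_3+z,y_3-z,2y_4,2y_5,2y_6$ subject to $y_i^2-2y_{i-1}y_{i+1}+\cdots\pm 2y_{2i}=0$ for $i=1,2$ and $(y_3+z)(y_3-z)-2y_3y_5+2y_6=0$. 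These already supply all generators and all relations of $I$ not involving the $x_k$.

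Next I would set up the morphism of split extensions of algebras
\[
\xymatrix{H_*(\Omega_0 SO(8);\Z)\ar[r]\ar[d] & H_*(\Omega(SO(8)/T^2);\Z)\ar[r]\ar[d] & H_*(T^2;\Z)\ar[d]\\ H_*(\Omega_0 SO(8);\Q)\ar[r] & H_*(\Omega(SO(8)/T^2);\Q)\ar[r] & H_*(T^2;\Q)}
\]
exactly as in the preceding theorem. Torsion-freeness of $H_*(\Omega(G/S);\Z)$ for $G$ simple and $S$ toral, established in Section~2, guarantees that the vertical maps are monomorphisms, so any relation visible rationally lifts to a relation among integral classes. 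The degree-one classes $x_1,x_2\in H_1(T^2;\Z)\cong\pi_2(SO(8)/T^2)$ split the extension, while $H_2(\Omega_0 SO(8);\Z)\cong\pi_3(SO(8)/T^2)$ is the free rank-one group on $y_1$.

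The substance of the theorem is then the determination of the extension, namely the relations expressing the products of $x_1,x_2$ in $H_*(\Omega(SO(8)/T^2);\Z)$. These are governed by the Whitehead product pairing $W\colon\pi_2(SO(8)/T^2)\otimes\pi_2(SO(8)/T^2)\to\pi_3(SO(8)/T^2)$ introduced at the start of Section~4, under the identifications above. For the triality embedding the pairing is read off from the restriction of the degree-two Weyl invariant $P_2=\sum x_i^2$, which by~\cite{TFP} produces the quadratic form $x_1^2+x_2^2+x_1x_2$ appearing in the rational cohomology of $SO(8)/T^2$ and hence the Lie brackets behind~\eqref{QSO(8)}. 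Comparing with the rational computation~\eqref{QSO(8)}, in which $a_1^2=a_2^2=a_1a_2+a_2a_1=b_1$ and under which $x_i$ maps to $a_i$ and $y_1$ to $b_1$, I conclude that the integral lifts satisfy $x_1^2=y_1$, $x_1^2=x_2^2$, and the mixed relation recorded in $I$, with no further relations since~\eqref{QSO(8)} is already reduced.

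The main obstacle is precisely this last step: fixing the integral extension relations among $x_1,x_2$, and in particular the mixed product $x_1x_2+x_2x_1$. This is where the triality embedding of $T^2$ genuinely departs from the coordinate subtorus occurring in the generic $SO(2n+2)/T^n$ case, since the off-diagonal term of the Whitehead form is non-zero; one must verify that $W$ is computed from the correctly restricted invariant $P_2$. The remaining difficulty is to rule out a torsion obstruction introducing integral relations beyond those seen rationally, and this is exactly the role played by the torsion-freeness Corollary of Section~2, which forces the monomorphisms above and hence pins the integral ring down to the stated presentation.
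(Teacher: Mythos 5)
Your proposal is correct and follows essentially the same route as the paper, which proves this theorem by direct analogy with the $SO(2n+2)/T^n$ case (Bott's description of $H_*(\Omega_0 SO(8);\Z)$ with $n=3$, the Spin--$SO$ equivalence, the morphism of split extensions with torsion-freeness forcing monomorphisms, and comparison with the rational computation~\eqref{QSO(8)}), supplemented by the remark that $T^2$ sits in the maximal torus $T^4$ via $x_4=0$, $x_1=x_2+x_3$. Your identification of the off-diagonal Whitehead term $x_1x_2+x_2x_1$ with the mixed term of the restricted invariant $P_2$, yielding $x_1^2+x_2^2+x_1x_2$ and hence $[a_1,a_2]=b_1$, is exactly the point the paper isolates in that remark.
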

\begin{remark}
It follows from~\cite{TFP} that for this generalised symmetric space $T^2$ is embedded  in the maximal torus $T^4$ for $SO(8)$ in such a way that  the canonical integral generators $x_1,x_2,x_3,x_4$ for $H_{*}(T^4;\Z )$ restrict on $T^2$ to $x_4=0$ and $x_1=x_2+x_3$. We took this  into account when we considered  the  morphism  $H_{*}(\Omega _{0}SO(8);\Q )\to H_{*}(\Omega (SO(8)/T^2);\Q )$. 
\end{remark}
\subsection{The integral homology of $\Omega (E_6/T^4)$}
The integral homology algebra $H_{*}(\Omega E_6;\Z )$ is described
in~\cite{N} and it is given by
\[
H_{*}(\Omega E_6;\Z )\cong \Z
[y_1,y_2,y_3,y_4,y_5,y_7,y_8,y_{11}]/\langle y_1^2-2y_2,
y_1y_2-3y_3\rangle \;
\]
where $\deg y_{i}=2i$ for $i=1,2,3,4,5,7,8,11$.

Using the same argument as for the previous cases, we deduce the
integral Pontrjagin homology of the based loop space on $E_6/T^4$.
\begin{theorem}\label{intE6/T}
The integral Pontrjagin homology ring of $\Omega (E_6/T^4)$ is given
by
\[
H_{*}(\Omega (E_6/T^4); \Z )\cong
\]
\[
\left(T(x_1,x_2,x_3,x_4)\otimes \Z
[y_1,y_2,y_3,y_4,y_5,y_7,y_8,y_{11}]\right)/I
\]
where $I= \langle x_k^2=12y_1 \text{ for } 1\leq
k\leq 4, \; x_px_q+x_qx_p \text{ for } 1\leq p,q\leq 4,\; 2y_2=x_1^4,\; 3y_3=x_1^2y_2 \rangle$ and where $\deg
x_i=1$ for $1\leq i\leq 4$, and $\deg y_i=2i$ for
$i=1,2,3,4,5,7,8,11$.
\end{theorem}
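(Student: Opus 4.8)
The plan is to run the extension argument exactly as in the proof of Theorem~\ref{SU2n1integral}, replacing the polynomial subalgebra $H_*(\Omega SU(2n+1);\Z)$ by the ring $H_*(\Omega E_6;\Z)$ recalled above from~\cite{N}. Since $E_6$ is simply connected, so is $E_6/T^4$, and hence $H_*(\Omega(E_6/T^4);\Z)$ is torsion free by the Corollary of Section~2. First I would record the split extension of algebras
\[
\xymatrix{1\ar[r] & H_*(\Omega E_6;\Z)\ar[r] & H_*(\Omega(E_6/T^4);\Z)\ar[r] & H_*(T^4;\Z)\ar[r] & 1}
\]
furnished by Section~4, together with the corresponding rational extension, into which it injects because there is no torsion. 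Here $x_1,\ldots,x_4$ of degree $1$ are the integral generators of $H_*(T^4;\Z)\cong\wedge(x_1,\ldots,x_4)$, dual to the basis $\bar H_1,\ldots,\bar H_4$ of the torus recorded in~\cite{TFP}, while $y_1,\ldots,y_{11}$ are the integral generators of $H_*(\Omega E_6;\Z)$ subject to $y_1^2=2y_2$ and $y_1y_2=3y_3$.

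Next I would pin down the extension data, which is the only nontrivial input. All the indeterminacy lives in degree $2$: the classes $x_k^2$ and $x_px_q+x_qx_p$ of the total ring lie in the degree-$2$ part of $H_*(\Omega E_6;\Z)$, which is free of rank one on $y_1$, so $x_k^2=c\,y_1$ and $x_px_q+x_qx_p=c_{pq}\,y_1$ for integers $c$ and $c_{pq}$. These integers are read off after rationalisation from the Whitehead-product pairing of Section~4, and comparing with the rational computation for $E_6/T^4$ in Section~\ref{s-rational}---where the quadratic part of the differential is $d_1(v_1)=12(u_1^2+\cdots+u_4^2)$ and the induced brackets are $[a_i,a_i]=24b_1$ with $[a_i,a_j]=0$ for $i\neq j$---forces $c=12$ and $c_{pq}=0$. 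For degree reasons the lifts of the $x_i$ commute with every $y_j$, exactly as in the $SU$ cases.

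With this in hand I would assemble the presentation. The subalgebra relations $y_1^2=2y_2$ and $y_1y_2=3y_3$ persist in the total ring; adjoining to them the extension relations $x_k^2=12y_1$ and $x_px_q+x_qx_p=0$ together with the commutation of the $x_i$ with the $y_j$ yields the ideal $I$. Because the vertical rationalisation map is a monomorphism, any integral class dying rationally is already zero, so these relations are exhaustive and no further ones are hidden; the splitting then identifies the total ring with the displayed quotient of $T(x_1,\ldots,x_4)\otimes\Z[y_1,\ldots,y_{11}]$.

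The hard part will be the one feature absent from the $SU$ examples: $H_*(\Omega E_6;\Z)$ is not a polynomial algebra but carries the relations $y_1^2=2y_2$ and $y_1y_2=3y_3$ involving the small primes $2$ and $3$. One must verify that these interact correctly with the integral extension constant, so that the final presentation records the relations among $y_2,y_3$ and the $x_i$ with exactly the right integer coefficients rather than spurious multiples. Checking that the constant is genuinely $12$---and not merely $12$ up to a unit detectable only after rationalisation---is the delicate point, and it rests on the integrality of the Whitehead-product pairing combined with the torsion-free conclusion of the Corollary.
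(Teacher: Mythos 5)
Your proposal is correct and is essentially the paper's own argument: the paper proves this theorem in a single sentence (``using the same argument as for the previous cases''), and you have spelled out exactly those steps --- torsion-freeness of $H_*(\Omega(E_6/T^4);\Z)$, the split extension by $H_*(\Omega E_6;\Z)$ from Nakagawa, injection into the rational extension, and reading off the extension constants $c=12$, $c_{pq}=0$ from the rational brackets $[a_i,a_i]=24b_1$, $[a_i,a_j]=0$. One remark: the relations you correctly derive, $y_1^2=2y_2$ and $y_1y_2=3y_3$, do not literally coincide with the printed generators of $I$ (namely $2y_2=x_1^4$ and $3y_3=x_1^2y_2$, which with $x_k^2=12y_1$ are off by factors of $144$ and $12$ and would force torsion), so the theorem's statement appears to carry a typo that your derivation silently corrects.
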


\section{Appendix}
In the Appendix we illustrate that our methods could be applied not only to generalised symmetric spaces having toral stationary subgroup but wider. We concentrate on the example of $U(n)/(T^k\times U(n-k))$ for $k\leq n-2$.

\begin{lemma}\label{more}
The homology groups $H_*(\Omega (U(n)/T^l\times U(k));\mathbb Z)$, $l+k\leq n$, are torsion free.
\end{lemma}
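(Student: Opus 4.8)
The plan is to reduce, exactly as in the passage following Proposition~\ref{simplyconnected}, to a space whose based loop space homology is already known to be torsion free, and then to settle that base case by a spectral sequence computation. Assume first $k\geq 1$. The homogeneous space $U(n)/U(k)$ is the complex Stiefel manifold $V_{n-k}(\mathbb{C}^n)$, which is $2k$-connected and in particular simply connected. Since $T^l\times U(k)$ is a subgroup of $U(n)$ with $l\leq n-k=\rk U(n)-\rk U(k)$, Proposition~\ref{simplyconnected} applies with $G=U(n)$ and $H=U(k)$ and yields the homotopy decomposition $\Omega(U(n)/(T^l\times U(k)))\simeq T^l\times\Omega(U(n)/U(k))$. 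As $H_*(T^l;\Z)$ is free and the K\"unneth theorem introduces no Tor terms, it suffices to prove that $H_*(\Omega(U(n)/U(k));\Z)$ is torsion free. The remaining case $k=0$ is the torus quotient $U(n)/T^l$, which is simply connected because the composite $T^l\hookrightarrow U(n)\to S^1$ given by the determinant is onto on $\pi_1$; here Proposition~\ref{simplyconnected} reduces the claim to the flag manifold $U(n)/T^n$, whose based loop space is torsion free by~\cite{GT}.

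Next I would compute $H_*(\Omega(U(n)/U(k));\Z)$. The integral cohomology of the Stiefel manifold is the free exterior algebra $H^*(U(n)/U(k);\Z)\cong\Lambda(x_{2k+1},x_{2k+3},\ldots,x_{2n-1})$ on odd degree generators, and so is torsion free. I would feed this into the Eilenberg--Moore spectral sequence of the path--loop fibration $\Omega(U(n)/U(k))\to P(U(n)/U(k))\to U(n)/U(k)$, whose $E_2$-term is $\Tor^{H^*(U(n)/U(k);\Z)}(\Z,\Z)$. For an exterior algebra on odd generators the Koszul resolution computes this Tor explicitly as the divided power algebra $\Gamma_{\Z}[u_{2k},u_{2k+2},\ldots,u_{2n-2}]$ with $\deg u_{2j}=2j$; as a graded abelian group this is free, so $E_2$ is torsion free. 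The ranks match the rational picture, since rationally $U(n)/U(k)$ has the homotopy type of the product of odd spheres $S^{2k+1}\times\cdots\times S^{2n-1}$, whose based loop space has polynomial rational homology on generators in the same degrees.

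The hard part will be passing from $E_2$ to the abutment. I would argue that the spectral sequence collapses at $E_2$: because $H^*(U(n)/U(k);\Z)$ is a \emph{free} graded-commutative algebra, the minimal Koszul resolution carries no higher multiplicative obstructions and all higher Eilenberg--Moore differentials vanish. Finally, since $E_\infty=E_2$ is a free graded abelian group, each extension problem in reassembling $H^*(\Omega(U(n)/U(k));\Z)$ is an extension of free modules and hence splits, so $H^*(\Omega(U(n)/U(k));\Z)$, and therefore $H_*(\Omega(U(n)/U(k));\Z)$, is torsion free. This collapse-and-no-extension step is the crux; the reduction in the first paragraph is purely formal.

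As an alternative to the Eilenberg--Moore argument I would induct on $n$ using the sphere bundles $U(m)/U(k)\to U(m+1)/U(k)\to S^{2m+1}$ and the looped Serre spectral sequence $\Omega(U(m)/U(k))\to\Omega(U(m+1)/U(k))\to\Omega S^{2m+1}$, in which both the fibre (by induction) and the base have free homology. There the total space is $\Omega(U(m+1)/U(k))$, so the spectral sequence runs in the favourable direction; however, since subquotients of free abelian groups can acquire torsion, one must still rule out differentials creating torsion, and the essential difficulty remains exactly the vanishing of the higher differentials.
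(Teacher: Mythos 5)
Your first paragraph is exactly the paper's proof: the paper disposes of the lemma in one line by applying Proposition~\ref{simplyconnected} with $G=U(n)$, $H=U(k)$ and quoting the known computation $H_*(\Omega(U(n)/U(k));\Z)\cong\Z[x_{2k},\ldots,x_{2(n-1)}]$, which is visibly torsion free. Your separate treatment of $k=0$ via the determinant and the flag manifold $U(n)/T^n$ with~\cite{GT} is a welcome addition, since the paper's appeal to simple connectivity of $U(n)/U(k)$ tacitly assumes $k\geq 1$. Where you depart from the paper is in proving the base case instead of citing it, and there your crux step is asserted rather than argued: ``the minimal Koszul resolution carries no higher multiplicative obstructions'' is not a reason for the higher Eilenberg--Moore differentials to vanish. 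Over $\Z$ the divided power algebra $\Gamma_{\Z}[u_{2k},\ldots,u_{2n-2}]$ is \emph{not} generated as an algebra in filtration $-1$ (the divided powers $\gamma_s(u)$ sit in filtration $-s$), so the standard ``algebra generators are permanent cycles and $d_r$ is a derivation'' argument --- which is what your phrasing gestures at, and which does work when $\Tor$ is exterior on filtration $-1$ classes --- is unavailable here.

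Fortunately the collapse you need holds for a much more elementary reason that you did not invoke: each $u_{2j}$ has even total degree (internal degree $2j+1$, homological degree $-1$), hence the whole $E_2$-term $\Gamma_{\Z}[u_{2k},\ldots,u_{2n-2}]$ is concentrated in even total degree, while every differential $d_r$ shifts total degree by one; so all differentials vanish on parity grounds, $E_\infty=E_2$ is a free abelian group in each total degree, and the extension problems split as you say. With that one-line repair your Eilenberg--Moore paragraph becomes a correct, self-contained proof of the fact the paper merely quotes. The same parity observation also rescues your alternative inductive route: in the looped Serre spectral sequence of $\Omega(U(m)/U(k))\to\Omega(U(m+1)/U(k))\to\Omega S^{2m+1}$, the fibre homology is (by induction) polynomial on even-degree generators and the base homology is $\Z[x_{2m}]$, so the $E^2$-term is concentrated in even total degree and the differentials, which lower total degree by one, must vanish --- the ``essential difficulty'' you flag there evaporates. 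In summary: your reduction coincides with the paper's, your base-case analysis goes beyond the paper but contains one unjustified step whose correct justification is the parity of total degrees, not freeness of the cohomology algebra.
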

\begin{proof}
As $U(n)/U(k)$ is simply connected and $H_*(\Omega (U(n)/U(k);\Z )\cong \Z [x_{2k},\ldots ,x_{2(n-1)}]$, the statement follows by Proposition~\ref{simplyconnected}.
\end{proof}  

\begin{theorem}\label{more-rational}
The rational homology ring of the loop space on the 
manifold $U(n)/(T^k\times U(n-k))$, $k\leq n-2$ is
\begin{equation}\label{partial} 
H_*(\Omega ( U(n)/(T^k\times U(n-k))); \Q) \cong \wedge (a_1,\ldots ,a_k)
\otimes \Q[b_{n-k+1},\ldots ,b_n],
\end{equation}
where the generators $a_{i}$ are of degree $1$ for $1\leq i\leq k$, and
the generators $b_{j}$ are of degree $2j-2$  for $n-k+1\leq j\leq n$.
\end{theorem}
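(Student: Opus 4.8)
The plan is to follow the same route as in the proofs of Section~\ref{s-rational}: compute the rational cohomology, pass to the Sullivan minimal model (the space is formal), read off the homotopy Lie algebra from the quadratic part of the differential, and apply the Milnor and Moore isomorphism $H_*(\Omega M;\Q)\cong U\L$. The space $U(n)/(T^k\times U(n-k))$ is a homogeneous space of a \emph{full-rank} subgroup, since $\rk(T^k\times U(n-k))=k+(n-k)=n=\rk U(n)$; in particular it has positive Euler characteristic and is therefore a Cartan pair homogeneous space, hence formal. First I would record its rational cohomology via the Borel description. Writing $c_1,\ldots,c_n$ for the Weyl invariant generators of $H^*(BU(n);\Q)$ (the Chern classes, $\deg c_i=2i$), and taking $x_1,\ldots,x_k$ together with the elementary symmetric functions $\sigma_1,\ldots,\sigma_{n-k}$ of the $U(n-k)$-roots as generators of $H^*(B(T^k\times U(n-k));\Q)$, formula~\eqref{cpc} gives
\[
H^*(U(n)/(T^k\times U(n-k));\Q)\cong \Q[x_1,\ldots,x_k,\sigma_1,\ldots,\sigma_{n-k}]\big/\big\langle \rho^*(c_1),\ldots,\rho^*(c_n)\big\rangle,
\]
where $\rho^*(c_i)$ is the $i$-th elementary symmetric polynomial in the full set of Chern roots. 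Since the subgroup is full rank, no exterior generators from $H^*(U(n);\Q)$ survive.

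Next I would build the minimal model following Subsection~\ref{MinMod} and carry out the reduction described at the end of Subsection~\ref{Coh}. The key is to sort the relations by whether they carry a linear part. Expanding $\rho^*(c_i)=\sum_{j\ge 0}e_j(x_1,\ldots,x_k)\,\sigma_{i-j}$ (with $\sigma_0=1$), one sees that for $1\le i\le n-k$ the relation contains the single generator $\sigma_i$ as linear part, whereas for $n-k+1\le i\le n$ there is no $\sigma_i$ and every monomial is a product of at least two generators, so $\rho^*(c_i)$ is decomposable. Consequently the first $n-k$ relations eliminate $\sigma_1,\ldots,\sigma_{n-k}$ together with their dual odd generators, and the minimal model reduces to
\[
\mu=\big(\wedge(x_1,\ldots,x_k,v_{n-k+1},\ldots,v_n),\,d\big),\qquad \deg x_i=2,\ \deg v_j=2j-1,
\]
with $dx_i=0$ and $dv_j=\rho^*(c_j)$, now a homogeneous polynomial of degree $j$ in $x_1,\ldots,x_k$ alone.

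The homotopy Lie algebra then has underlying space $L=(a_1,\ldots,a_k,b_{n-k+1},\ldots,b_n)$ with $\deg a_i=1$ (dual to $x_i$) and $\deg b_j=2j-2$ (dual to $v_j$). The decisive step, and the only place where $k\le n-2$ is used, is the vanishing of the quadratic part $d_1$. Since $k\le n-2$ forces $n-k+1\ge 3$, each reduced relation $dv_j=\rho^*(c_j)$ is homogeneous of degree $j\ge 3$ in the degree-$2$ generators $x_i$, so it contains no term lying in $\wedge^2V$; hence $d_1\equiv 0$. By the defining relation~\eqref{bracket} all Lie brackets vanish, so $\L$ is abelian. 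Applying $H_*(\Omega M;\Q)\cong U\L$ then yields the free graded-commutative algebra on $L$, namely $\wedge(a_1,\ldots,a_k)\otimes\Q[b_{n-k+1},\ldots,b_n]$, which is exactly~\eqref{partial}.

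The main obstacle is the bookkeeping in the reduction step. One must verify that the linear parts of $\rho^*(c_1),\ldots,\rho^*(c_{n-k})$ are precisely $\sigma_1,\ldots,\sigma_{n-k}$, so that exactly these generators and their odd partners are eliminated while none of $x_1,\ldots,x_k$ is lost, and that no exterior generators survive from $H^*(U(n);\Q)$ because the subgroup is full rank. The genuinely delicate point is that before reduction $\rho^*(c_{n-k+1})$ does carry a quadratic term $e_1(x)\,\sigma_{n-k}$; one has to confirm that substituting the eliminated $\sigma$'s converts it into monomials of degree $n-k+1\ge 3$ in the $x_i$, so that the reduced differential really has vanishing quadratic part. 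Once this is settled, $d_1\equiv 0$ is immediate from the degree count and the remaining Milnor and Moore computation is routine.
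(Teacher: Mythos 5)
Your proof is correct and takes essentially the same route as the paper: formality of the space, the key observation that $k\leq n-2$ forces the relations to have degree $\geq 2(n-k+1)\geq 6$ so that the quadratic part $d_1$ of the minimal model differential vanishes, hence an abelian homotopy Lie algebra, and the Milnor--Moore isomorphism yielding the free graded-commutative algebra~\eqref{partial}. The only difference is that you re-derive the cohomology presentation from the Borel/Cartan description by eliminating $\sigma_1,\ldots,\sigma_{n-k}$, whereas the paper simply cites~\cite{TFP} and~\cite{TMZ} for it; this elaboration is sound (modulo the harmless slip that the linear part of $\rho^*(c_1)$ is $\sigma_1+e_1(x)$ rather than $\sigma_1$ alone, which does not affect the elimination).
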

\begin{proof}
It follows from~\cite{TFP} and~\cite{TMZ} that the rational cohomology for this space is given by
\[
H^{*}(U(n)/(T^k\times U(n-k));\Q )\cong \Q [x_1,\ldots ,x_k]/ \left\langle P_{n-k+1},\ldots P_n \right\rangle 
\]
where $P_j=\sum_{i=1}^{j}x_i^j$ for $n-k+1\leq j\leq n$. Since $\deg (P_j)=2j\geq 2(n-k+1)\geq 6$ we conclude that the differential of the minimal model for this spaces has no quadratic part. Therefore the given procedure for the computation of the corresponding loop space homology leads to formula~\eqref{partial}.
\end{proof}

\begin{theorem}
The integral Pontrjagin homology ring of the loop space on  $U(n)/(T^k\times U(n-k))$, $k\leq n-2$ is
\begin{equation}\label{partialinteg} 
H_*(\Omega ( U(n)/(T^k\times U(n-k))); \Z) \cong \wedge (x_1,\ldots ,x_k)
\otimes \Z[y_{n-k+1},\ldots ,y_n]
\end{equation}
where the generators $x_{i}$ are of degree $1$ for $1\leq i\leq k$ and
the generators $y_{j}$ are of degree $2j-2$  for $n-k+1\leq j\leq n$.
\end{theorem}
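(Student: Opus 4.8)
The plan is to run the argument of Section~4 essentially verbatim, using Lemma~\ref{more} for torsion-freeness and Theorem~\ref{more-rational} for the rational ring, with one decisive simplification peculiar to the range $k\le n-2$: there is no room for the degree-one generators to acquire nonzero squares. First I would invoke Lemma~\ref{more} (with $l=k$ torus factors and unitary factor $U(n-k)$, so that the index condition reads $k+(n-k)=n\le n$) to conclude that $H_*(\Omega(U(n)/(T^k\times U(n-k)));\Z)$ is torsion free. Consequently the rationalisation map into $H_*(\Omega(U(n)/(T^k\times U(n-k)));\Q)\cong\wedge(a_1,\ldots,a_k)\otimes\Q[b_{n-k+1},\ldots,b_n]$ of Theorem~\ref{more-rational} is injective, so it suffices to exhibit integral generators mapping onto the rational generators and to verify the relations.

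Next I would set up the extension of algebras exactly as in Section~4. Looping the fibre bundle $T^k\to U(n)/U(n-k)\to U(n)/(T^k\times U(n-k))$ underlying Proposition~\ref{simplyconnected} yields the fibration $\Omega(U(n)/U(n-k))\to\Omega(U(n)/(T^k\times U(n-k)))\to T^k$ and hence a split extension of algebras
\[
1\to H_*(\Omega(U(n)/U(n-k));\Z)\to H_*(\Omega(U(n)/(T^k\times U(n-k)));\Z)\to H_*(T^k;\Z)\to 1.
\]
The kernel is the integral loop homology of the complex Stiefel manifold, which by the proof of Lemma~\ref{more} is the polynomial algebra $\Z[y_{n-k+1},\ldots,y_n]$ with $\deg y_j=2j-2$, and the quotient is $H_*(T^k;\Z)=\wedge(x_1,\ldots,x_k)$ with $\deg x_i=1$. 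As in Section~4, the extension is controlled by the Whitehead pairing $W\colon H_1(T^k)\otimes H_1(T^k)\to H_2(\Omega(U(n)/U(n-k));\Z)$ together with the commutators of the $x_i$ with the $y_j$.

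The heart of the argument is then a degree count. The kernel $\Z[y_{n-k+1},\ldots,y_n]$ has lowest positive degree $2(n-k)\ge 4$ (this is where $k\le n-2$ enters, and it is the same bound $\deg P_j\ge 6$ that killed the quadratic part of the minimal model in Theorem~\ref{more-rational}); in particular its degree-$2$ part is trivial. Since $x_i^2$ and $x_ix_j+x_jx_i$ lie in degree $2$ and map to zero in the exterior quotient, they must already vanish in the total algebra; equivalently, the Whitehead pairing above lands in a zero group. This is precisely the feature that distinguishes the present case from $SU(2n+1)/T^n$, $SO(2n+2)/T^n$ and $E_6/T^4$, where $x_i^2$ is a nonzero degree-$2$ generator. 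The mixed commutators $x_iy_j-y_jx_i$ map to $a_ib_j-b_ja_i=0$ rationally, hence vanish integrally by torsion-freeness, and the $y_j$ commute among themselves as they already do in the kernel. Assembling these relations gives the isomorphism $\wedge(x_1,\ldots,x_k)\otimes\Z[y_{n-k+1},\ldots,y_n]$. The only real obstacle is conceptual rather than computational: the splitting of Proposition~\ref{simplyconnected} is not an $H$-equivalence, so a priori the ring could be a twisted tensor product; what the proof must establish -- and what the degree bound $2(n-k)\ge4$ delivers -- is that every potential twisting term is forced to be zero. One should also note that the generators $x_i,y_j$ map to $a_i,b_j$ without spurious divisibility, since both the kernel and the quotient are already integrally free on these classes.
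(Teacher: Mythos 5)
Your proposal is correct and follows essentially the same route as the paper: Lemma~\ref{more} for torsion-freeness, Theorem~\ref{more-rational} for the rational ring, and the monomorphism of split algebra extensions induced by the fibration $\Omega(U(n)/U(n-k))\to\Omega(U(n)/(T^k\times U(n-k)))\to T^k$, concluding from the triviality of the Whitehead products defining the extension. Your explicit degree count --- the kernel $\Z[y_{n-k+1},\ldots,y_n]$ is trivial in degree $2$ because its lowest generator sits in degree $2(n-k)\ge 4$, so the Whitehead pairing lands in a zero group --- simply spells out the step the paper asserts ``at once.''
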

\begin{proof}
It follows from Lemma~\ref{more} and  rational computation given in Theorem~\ref{more-rational} that the  fibration
$\Omega (U(n)/U(n-k))\to \Omega (U(n)/(T^k\times U(n-k)))\to T^k$  induces the monomorphism of two split algebra extensions
\[
\xymatrix{ 1 \ar[r]\ar[d] & H_*(\Omega (U(n)/U(k)); \Z)\ar[r]\ar[d] &
H_*(\Omega(U(n)/(T^k\times U(n-k)));\Z )\ar[r]\ar[d] &H_*(T^k;\Z )\ar[r]\ar[d] &
1\ar[d]\\
1 \ar[r]& H_*(\Omega(U(n)/U(k)); \Q)\ar[r] & H_*(\Omega(U(n)/(T^k\times U(n-k)));\Q
)\ar[r] &H_*(T^k; \Q)\ar[r]&1.}
\]

As all the Whitehead products which define the extension are trivial, the theorem follows at once.
\end{proof}

\bibliographystyle{amsalpha}

\end{document}